\documentclass[11pt, hyperref]{article}
\usepackage{authblk}
\usepackage{amsmath,amssymb,amsfonts,amsthm,graphicx}
\usepackage[colorlinks=true,linkcolor=blue,citecolor=blue,urlcolor=blue]{hyperref}
\usepackage{subcaption,float}
\usepackage{cite}

\newtheorem{theorem}{Theorem}[section]
\newtheorem{lemma}[theorem]{Lemma}
\theoremstyle{definition}
\newtheorem{definition}[theorem]{Definition}
\newtheorem{remark}[theorem]{Remark}

\title{Port-Hamiltonian Control and Structure-Preserving Algorithm for Grid-Forming SVGs}
\author[1]{Jiaxin Qian\thanks{qianjiaxin@amss.ac.cn}}
\author[2]{Feng Ji\thanks{jameskeating@163.com}}
\author[1]{Sixu Wu\thanks{lucaswu@amss.ac.cn}}
\author[3]{Mingyang Liu\thanks{liumingyang3@ha.sgcc.com.cn}}
\author[1,4]{Yifa Tang\thanks{tyf@lsec.cc.ac.cn}\footnote{corresponding author}}
\affil[1]{State Key Laboratory of Mathematical Sciences, Academy of Mathematics and Systems Science, Chinese Academy of Sciences, Beijing 100190, China}
\affil[2]{State Key Laboratory of Advanced Transmission Technology (China Electric Power Research Institute Ltd), Changping District, Beijing 102200, China}
\affil[3]{State Grid Henan Electric Power Research Institute, Zhengzhou 450052, China}
\affil[4]{School of Mathematical Sciences, University of Chinese Academy of Sciences, Beijing 100049, China}

\date{}

\begin{document}

\maketitle

\begin{abstract}
This paper presents a port-Hamiltonian (PH) modeling, control, and structure-preserving simulation framework for grid-forming static var generators (SVGs). A PH model is established that captures energy exchange among the inductor, capacitor, and DC-link storage ports. Since external disturbances cannot be fully canceled by feedback, an input-to-state stable (ISS) controller is designed to steer subsystem states to zero while minimizing disturbance effects. The controller contains only three tunable parameters with clear physical interpretations and is robust against input errors. A Dirac-structure-preserving midpoint rule is developed, which exactly conserves the Hamiltonian energy when disturbances are absent. Numerical comparisons show that the ISS controller achieves faster settling, smaller offset, and lower control effort than a conventional PI controller, and the structure-preserving midpoint rule maintains exact energy conservation and superior long-term accuracy over standard Runge-Kutta methods.
\end{abstract}

\noindent\textbf{Keywords:}Port‑Hamiltonian system; Dirac structure; Structure‑preserving algorithm; Grid‑forming static var generator (SVG); Input‑to‑state stability (ISS).

\section{Introduction}\label{introduction}
Grid-forming static var generators (SVGs) are playing an increasingly vital role in modern power systems, especially as the penetration of renewable energy sources continues to rise and conventional synchronous generators are progressively being decommissioned~\cite{qaisar2025grid}. Unlike grid-following converters, which rely on a strong grid voltage reference, grid-forming converters actively establish the grid voltage and frequency, thereby providing essential inertia and voltage support to the system~\cite{blaabjerg2017distributed}.

Among various grid-forming devices, the SVG is particularly valued for its ability to regulate reactive power and maintain bus voltage. Traditionally, PI control has been the most widely used strategy in SVG systems due to its structural simplicity and ease of implementation~\cite{teng2024adaptive}. However, conventional PI controllers exhibit several intrinsic drawbacks that the present work directly addresses. First, their performance heavily depends on the accuracy of the mathematical model, yet they lack robustness against input errors caused by discrete-time implementation~\cite{xuadaptive}. Second, the fixed PI gains often fail to adapt to changing grid conditions, leading to deteriorated dynamic behavior under system uncertainties and external disturbances; moreover, the PI controller typically requires trial‑and‑error or LQR‑based tuning of multiple parameters without clear physical interpretation. Third, under strong disturbances, PI control produces noticeable steady‑state offsets and oscillatory responses, while demanding relatively high control input magnitudes. Adaptive PI methods have been developed to address some of these limitations, yet they still struggle with issues such as incomplete active‑reactive decoupling and limited robustness under strong disturbances~\cite{teng2024adaptive,xuadaptive}.

To overcome these shortcomings, energy-based control methods have emerged as a promising alternative~\cite{ortega2002interconnection}. Among these, the port-Hamiltonian framework offers a particularly powerful paradigm, modeling the system as a collection of interconnected energy storage, dissipation, and external ports, with each port associated with an energy exchange pathway~\cite{van2020port}. Consequently, control design can be centered around the management of system energy variation~\cite{maschke1993port,ortega2002interconnection}.

Moreover, a port-Hamiltonian system inherently possesses a geometric structure known as the Dirac structure, which encodes how energy flows among the system's components and fundamentally determines the trend of energy changes~\cite{van2020port,mehrmann2019structure}. Preserving this structure in numerical simulations is critical; otherwise, conventional discretization algorithms may artificially inject or dissipate energy, leading to unphysical numerical drift—a problem that remains largely overlooked in existing SVG research~\cite{kumar2025port,hairer2006geometric,mehrmann2019structure}.

In a broader context, the importance of preserving geometric structures in numerical integration was first recognized by Feng Kang, who pioneered symplectic algorithms for Hamiltonian systems in the 1980s~\cite{feng1984difference}. His seminal
work established the theoretical framework for constructing symplectic difference schemes, demonstrating that preserving the symplectic structure of Hamiltonian dynamics yields superior long-term stability and energy conservation~\cite{Feng2020}. Reference~\cite{tang2026geometric} offers a comprehensive account of structure-preserving algorithms and machine learning techniques for dynamical systems.

In this paper, we address these challenges by developing a comprehensive port-Hamiltonian framework for grid-forming SVGs, together with an associated energy-shaping control strategy and a Dirac-structure-preserving numerical scheme. Compared with PI-based approaches, the input‑to‑state stable (ISS) controller proposed in this paper operates with only three physically interpretable parameters, guarantees bounded energy even when disturbances cannot be canceled exactly, exhibits strong robustness against input errors, and achieves faster settling, smaller offset, and lower control effort. Compared with non-structure-preserving Runge–Kutta discretizations, the structure-preserving Runge-Kutta algorithm proposed in this paper exactly conserves energy in the absence of external disturbances, enabling reliable long-term simulations even under strong control actions.

This paper is organized as follows. Section \ref{Model} presents a port‑Hamiltonian modeling of the grid‑forming SVG system, showing how the energy function is affected only by external disturbances. Section \ref{con} designs a feedback controller based on input‑to‑state stability that drives the subsystem back to the desired equilibrium while maintaining robustness against input errors. Section \ref{algorithm} introduces a Dirac structure preserving numerical algorithm, including the definition of Dirac structures, the definition of Dirac-structure-preserving algorithm and a two-stage second-order Runge-Kutta method preserving the Dirac structure of grid-forming SVGs generated by the midpoint rule. Section \ref{numexp} provides numerical experiments that compare the proposed controller with a conventional PI controller and compare the structure‑preserving algorithm with a standard Runge‑Kutta method. Finally, Section \ref{conclusion} concludes the paper by summarizing the advantages of the proposed methods and discussing two remaining problems.

\section{A Port-Hamiltonian Modeling for Grid-Forming SVG}\label{Model}

Assuming zero resistance, a grid-forming static var generator (SVG) satisfies the following equation: \begin{equation}
	\begin{aligned}
	\frac{d}{dt}\begin{bmatrix}
		x_1\\x_2\\x_3\\x_4\\x_5
	\end{bmatrix}=\begin{bmatrix}
		0&\omega&-\frac{1}{L}&0&0\\
		-\omega&0&0&-\frac{1}{L}&0\\
		\frac{1}{C}&0&0&\omega&0\\
		0&\frac{1}{C}&-\omega&0&0\\
		0&0&0&0&0
	\end{bmatrix}\begin{bmatrix}
		x_1\\x_2\\x_3\\x_4\\x_5
	\end{bmatrix}-\frac{1}{C}\begin{bmatrix}
		0\\0\\i_g^d\\i_g^q\\0
	\end{bmatrix}+\begin{bmatrix}
		\frac{1}{L}&0\\0&\frac{1}{L}\\0&0\\0&0\\-x_1&-x_2
	\end{bmatrix}\begin{bmatrix}
		u_1\\u_2
	\end{bmatrix}
\end{aligned}
\label{SVG}
\end{equation}
where $x_1,x_2,x_3,x_4,x_5$ are state variables. Specifically, $x_1$ and $x_2$ represent the two components of the filtered inductor current, $x_3$ and $x_4$ represent the two components of the output voltage, and $x_5$ represents the energy of a DC capacitor. The parameters $i_g^d$ and $i_g^q$ are external disturbances, while $u_1$ and $u_2$ are control inputs. Finally, $L$ denotes inductance, $C$ denotes capacitance, and $\omega$ denotes angular velocity. The quantities $L,C$ and $\omega$ can be regarded as three constants in a grid-forming SVG system.

Suppose the external disturbances satisfy $i_g^d=i_g^q=0$. Then the fixed points of system \eqref{SVG} satisfy $x_1^*=x_2^*=x_3^*=x_4^*=0$, and $x_5^*$ is determined by the initial condition $\mathbf{x}(0)=(x_1(0),x_2(0),x_3(0),x_4(0),x_5(0))$. Indeed, defining $H(\mathbf{x})=\frac{L}{2}(x_1^2+x_2^2)+\frac{C}{2}(x_3^2+x_4^2)+x_5$ which is the total energy of the grid-forming SVG system, we have $\frac{dH}{dt}=0$. Now use $H$ to rewrite \eqref{SVG} in port-Hamilton form defined below~\cite{maschke1993port,van2020port}.

\begin{definition}
	A port-Hamiltonian system is a control system of the form \begin{align}\label{PH}
		\dot{\mathbf{x}}=[J(\mathbf{x})-R(\mathbf{x})]\nabla H(\mathbf{x})+B(\mathbf{x})\mathbf{d}+C(\mathbf{x})\mathbf{u}
	\end{align}
	where \(\mathbf{x} \in \mathbb{R}^n\) is the state vector, \(\mathbf{d} \in \mathbb{R}^k\) is the vector of external disturbances, and \(\mathbf{u} \in \mathbb{R}^m\) is the control input vector. Moreover, \(J(\mathbf{x}) \in \mathbb{R}^{n \times n}\) is a skew-symmetric matrix, \(R(\mathbf{x}) \in \mathbb{R}^{n \times n}\) is a positive semi-definite symmetric matrix, and \(B(\mathbf{x}) \in \mathbb{R}^{n \times k}\), \(C(\mathbf{x}) \in \mathbb{R}^{n \times m}\) are arbitrary matrices. The scalar function \(H: \mathbb{R}^n \to \mathbb{R}\) is called the Hamiltonian function or energy function.
\end{definition}

For system \eqref{SVG}, let \(\mathbf{x} = (x_1, x_2, x_3, x_4, x_5)\) be the state vector and \(H(\mathbf{x}) = \frac{L}{2}(x_1^2 + x_2^2) + \frac{C}{2}(x_3^2 + x_4^2) + x_5\) be the Hamiltonian function. Then this system can be rewritten as \begin{align}\label{PHMod}
	\dot{\mathbf{x}}=J\nabla H(\mathbf{x})+B\mathbf{d}+C(\mathbf{x})\mathbf{u}
\end{align}
where $J=\begin{bmatrix}
		0&\frac{\omega}{L}&-\frac{1}{CL}&0&0\\
		-\frac{\omega}{L}&0&0&-\frac{1}{CL}&0\\
		\frac{1}{CL}&0&0&\frac{\omega}{C}&0\\
		0&\frac{1}{CL}&-\frac{\omega}{C}&0&0\\
		0&0&0&0&0
	\end{bmatrix}$, $B=\begin{bmatrix}
	0&0\\0&0\\\frac{1}{C}&0\\0&\frac{1}{C}\\0&0
	\end{bmatrix}$, $C(\mathbf{x})=\begin{bmatrix}
	\frac{1}{L}&0\\0&\frac{1}{L}\\0&0\\0&0\\-x_1&-x_2
	\end{bmatrix}$, $\mathbf{d}=\begin{bmatrix}
	i_g^d\\i_g^q
	\end{bmatrix}$ and $\mathbf{u}=\begin{bmatrix}
	u_1\\u_2
	\end{bmatrix}$.

The energy function \(H: \mathbb{R}^5 \to \mathbb{R}\) satisfies
\[
\frac{dH}{dt} = \nabla H(\mathbf{x})^\top \dot{\mathbf{x}} = -x_3 i_g^d - x_4 i_g^q,
\]
which means that the change of \(H\) depends only on the disturbance input \(\mathbf{d} = \begin{bmatrix} i_g^d & i_g^q \end{bmatrix}^\top\).

Because the target state variables to be controlled are the filtered inductor current and the output voltage, we now consider the subsystem \begin{align}\label{subPHMod}
\dot{\mathbf{x}}_{\text{sub}}=J_\text{sub}\nabla H(\mathbf{x}_{\text{sub}})+B_\text{sub}\mathbf{d}+C_\text{sub}\mathbf{u}
\end{align}
where \(\mathbf{x}_{\text{sub}} = [x_1, x_2, x_3, x_4]^\top\) is the state vector and the total energy of capacitor and inductor \(H_0(\mathbf{x}_{\text{sub}}) = \frac{L}{2}(x_1^2 + x_2^2) + \frac{C}{2}(x_3^2 + x_4^2)\) is the Hamiltonian function. This subsystem is also a port-Hamiltonian system, where $J_{\text{sub}}=\begin{bmatrix}
		0&\frac{\omega}{L}&-\frac{1}{CL}&0\\
		-\frac{\omega}{L}&0&0&-\frac{1}{CL}\\
		\frac{1}{CL}&0&0&\frac{\omega}{C}\\
		0&\frac{1}{CL}&-\frac{\omega}{C}&0
	\end{bmatrix}$, $B_{\text{sub}}=\begin{bmatrix}
		0&0\\0&0\\\frac{1}{C}&0\\0&\frac{1}{C}
	\end{bmatrix}$, $C_{\text{sub}}=\begin{bmatrix}
		\frac{1}{L}&0\\0&\frac{1}{L}\\0&0\\0&0
	\end{bmatrix}$, $\mathbf{d}=\begin{bmatrix}
	i_g^d\\i_g^q
	\end{bmatrix}$ and $\mathbf{u}=\begin{bmatrix}
	u_1\\u_2
	\end{bmatrix}$.

Based on this port-Hamiltonian formulation, the next section designs a feedback controller for the subsystem. The control objective is to drive the states $x_1$ through $x_4$ back to zero in the presence of external disturbances, while ensuring robustness against practical imperfections such as input errors.

\section{A Feedback Controller Based on Energy Function}\label{con}
From section \ref{Model}, the system to be controlled is the subsystem \eqref{subPHMod}. The control objective is to ensure that \(\mathbf{x}_{\text{sub}}\) returns nearly to \(\mathbf{0}\) despite the disturbance input \(\mathbf{d}\). For the port-Hamiltonian system \eqref{PH}, if there exists a matrix \(K(\mathbf{x}) \in \mathbb{R}^{m \times k}\) such that \(B(\mathbf{x}) = C(\mathbf{x}) K(\mathbf{x})\), then setting \(\mathbf{u} = -K(\mathbf{x})\mathbf{d} + \mathbf{v}\) transforms \eqref{PH} into
\[
\dot{\mathbf{x}} = [J(\mathbf{x}) - R(\mathbf{x})] \nabla H(\mathbf{x}) + C(\mathbf{x}) \mathbf{v}.
\]
If there exists a controller \(\mathbf{v} = \mathbf{h}(\mathbf{x})\) such that
\[
\frac{dH}{dt} = \nabla H(\mathbf{x})^\top \dot{\mathbf{x}} \le -\alpha H(\mathbf{x}), \quad \alpha > 0,
\]
then \(H\) converges to zero.

However, in system \eqref{subPHMod}, there is no \(K(\mathbf{x}_{\text{sub}}) \in \mathbb{R}^{2 \times 2}\) such that
\[
\begin{bmatrix} \frac{1}{L} & 0 \\ 0 & \frac{1}{L}\\0&0\\0&0 \end{bmatrix} K(\mathbf{x}_{\text{sub}}) = \begin{bmatrix} 0 & 0 \\ 0 & 0 \\ \frac{1}{C} & 0 \\ 0 & \frac{1}{C} \end{bmatrix},
\]
so the disturbance input \(\begin{bmatrix} i_g^d & i_g^q \end{bmatrix}^\top\) cannot be completely canceled. Therefore, we design a feedback controller that minimizes the impact of the disturbance on \(H_0\) as much as possible. A class of controller design methods is based on input-to-state stability of port-Hamilton systems~\cite{sontag1989smooth,sontag1995characterizations,van2000l2}.

\subsection{Input-to-State Stability}

\begin{definition}
	The port-Hamiltonian system \eqref{PH} is called input-to-state stable (ISS) if \(H\) satisfies
	\[
	\frac{dH}{dt} \le -\alpha H + \beta \|\mathbf{d}\|^2,
	\]
	where \(\|\mathbf{d}\| = \sqrt{\sum_{i=1}^k d_i^2}\) for \(\mathbf{d} \in \mathbb{R}^k\), and \(\alpha, \beta > 0\) are constants.
\end{definition}

If a port-Hamiltonian system is ISS, then by the second comparison theorem for ODEs~\cite{arnold1992ordinary},
\[
H(\mathbf{x}(t)) \le H_0 e^{-\alpha t} + \beta \int_0^t \|\mathbf{d}(\tau)\|^2 e^{-\alpha(t-\tau)} d\tau \le H_0 e^{-\alpha t} + \frac{\beta}{\alpha} \sup \|\mathbf{d}\|^2,
\]
where $H_0=H(\mathbf{x}(0))$. Furthermore, if \(\frac{\beta}{\alpha}\) is small, \(H(\mathbf{x}(t))\) will ultimately be contained within a small range; if \(\alpha\) is large, \(H(\mathbf{x}(t))\) will decay rapidly.

For system \eqref{PH}, to ensure that \(H\) returns to a bounded range, we need a feedback controller \(\mathbf{u} = \mathbf{h}(\mathbf{x})\) that renders the system ISS, and the parameters \(\alpha, \beta\) determine the size of the ultimate bound and the decay rate.

Inspired by $L_2$-gain-based control methods~\cite{sontag1989smooth,sontag1995characterizations,van2000l2,van2020port}, $\mathbf{h(x)}$ could satisfy the inequality in the following system:

\begin{theorem}\label{ine}
	The port-Hamiltonian system \eqref{PH} with control input \(\mathbf{u} = \mathbf{h}(\mathbf{x})\) is input-to-state stable if \begin{align}\label{condISS}
		\nabla H^\top[C(\mathbf{x})\mathbf{h(x)}+\frac{1}{4\beta}B(\mathbf{x})B(\mathbf{x})^\top\nabla H-R(\mathbf{x})\nabla H]+\alpha H\leq 0
	\end{align} for all $\mathbf{x}\in\mathbb{R}^n$.
\end{theorem}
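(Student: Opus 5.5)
The plan is to compute $\frac{dH}{dt}$ along trajectories of \eqref{PH} with $\mathbf{u}=\mathbf{h}(\mathbf{x})$, then bound the disturbance term with a completing-the-square (Young) inequality. The bound should produce exactly the quadratic term $\frac{1}{4\beta}\nabla H^\top BB^\top\nabla H$ plus $\beta\|\mathbf{d}\|^2$. Condition \eqref{condISS} then finishes the argument.

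First, by the chain rule and skew-symmetry of $J(\mathbf{x})$ (so $\nabla H^\top J\nabla H=0$),
\begin{align*}
\frac{dH}{dt}=\nabla H^\top\dot{\mathbf{x}} = -\nabla H^\top R(\mathbf{x})\nabla H+\nabla H^\top B(\mathbf{x})\mathbf{d}+\nabla H^\top C(\mathbf{x})\mathbf{h}(\mathbf{x}).
\end{align*}
Next, I will handle the cross term. Set $\mathbf{w}=B(\mathbf{x})^\top\nabla H\in\mathbb{R}^k$ and use the elementary inequality
\begin{align*}
0\le \Big\|\tfrac{1}{2\sqrt{\beta}}\mathbf{w}-\sqrt{\beta}\,\mathbf{d}\Big\|^2=\tfrac{1}{4\beta}\|\mathbf{w}\|^2-\mathbf{w}^\top\mathbf{d}+\beta\|\mathbf{d}\|^2,
\end{align*}
which gives $\nabla H^\top B\mathbf{d}\le \frac{1}{4\beta}\nabla H^\top BB^\top\nabla H+\beta\|\mathbf{d}\|^2$ for any $\beta>0$.

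Substituting this bound yields
\begin{align*}
\frac{dH}{dt}\le \nabla H^\top\Big[C(\mathbf{x})\mathbf{h}(\mathbf{x})+\tfrac{1}{4\beta}B(\mathbf{x})B(\mathbf{x})^\top\nabla H-R(\mathbf{x})\nabla H\Big]+\beta\|\mathbf{d}\|^2.
\end{align*}
Hypothesis \eqref{condISS} states that the bracketed expression, paired with $\nabla H^\top$, is at most $-\alpha H$ for every $\mathbf{x}$. Hence $\frac{dH}{dt}\le-\alpha H+\beta\|\mathbf{d}\|^2$ along every trajectory, which is the definition of ISS given above.

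There is no real obstacle here. The only point needing care is that the $\beta$ chosen in Young's inequality must be the same constant appearing in \eqref{condISS}, and the weighting $\frac{1}{2\sqrt\beta}$ versus $\sqrt\beta$ in the square is what enforces this. Since the condition holds pointwise for all $\mathbf{x}\in\mathbb{R}^n$, it also holds along every solution, with no invariance argument needed.
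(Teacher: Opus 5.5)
Your proof is correct and follows the paper's route: differentiate $H$ along trajectories (the $J$-term vanishes by skew-symmetry, which the paper uses tacitly), bound $\nabla H^\top B\mathbf{d}\le \frac{1}{4\beta}\|B^\top\nabla H\|^2+\beta\|\mathbf{d}\|^2$, and invoke \eqref{condISS}. The only cosmetic difference is that you complete the square in vector form, whereas the paper applies Cauchy--Schwarz and then the scalar inequality $ab\le\frac{1}{4\beta}a^2+\beta b^2$; the two are equivalent.
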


\begin{proof}
	Substitute \(\mathbf{u} = \mathbf{h}(\mathbf{x})\) into the derivative of \(H\):
	\[
	\begin{aligned}
		\frac{dH}{dt} &= \nabla H^\top \dot{\mathbf{x}} \\
		&= \nabla H^\top \bigl( -R\nabla H + B\mathbf{d} + C\mathbf{h} \bigr) \\
		&\le -\alpha H - \frac{1}{4\beta} \nabla H^\top B B^\top \nabla H + \nabla H^\top B \mathbf{d} \\
		&\le -\alpha H + \beta \|\mathbf{d}\|^2,
	\end{aligned}
	\]
	where we used the Cauchy–Schwartz inequality and the elementary inequality
	\[
	\nabla H^\top B \mathbf{d} \le \|B^\top \nabla H\| \|\mathbf{d}\| \le \frac{1}{4\beta} \|B^\top \nabla H\|^2 + \beta \|\mathbf{d}\|^2.
	\]
\end{proof}

\subsection{Controller Design}\label{controller}

For system \eqref{subPHMod}, inequality \eqref{condISS} becomes
\[
(x_1 h_1(\mathbf{x}_{\text{sub}}) + x_2 h_2(\mathbf{x}_{\text{sub}})) + \frac{1}{4\alpha\epsilon} (x_3^2 + x_4^2) + \alpha \left[ \frac{L}{2}(x_1^2 + x_2^2) + \frac{C}{2}(x_3^2 + x_4^2) \right] \le 0,
\]
where \(\epsilon = \frac{\beta}{\alpha}\). The parameter \(\epsilon\) determines the size of the region to which \(H_0\) eventually shrinks, and \(\alpha\) determines the decay rate.

Define the cost functional \(J(\mathbf{h}) = \int_0^T \|\mathbf{h}(\mathbf{x}_{\text{sub}}(t))\|^2 dt\). To minimize \(J(\mathbf{h})\), we minimize \(\|\mathbf{h}(\mathbf{x}_{\text{sub}})\|^2\) for each \(\mathbf{x}_{\text{sub}} \in \mathbb{R}^4\). By the Cauchy–Schwarz inequality,
\[
-\sqrt{x_1^2 + x_2^2} \|\mathbf{h}\| \le x_1 h_1 + x_2 h_2.
\]Moreover,
\[
x_1 h_1 + x_2 h_2 \le -\left[ \frac{1}{4\alpha\epsilon}(x_3^2 + x_4^2) + \alpha \left( \frac{L}{2}(x_1^2 + x_2^2) + \frac{C}{2}(x_3^2 + x_4^2) \right) \right].
\]
The right-hand side is a constant for a fixed \(\mathbf{x}_{\text{sub}}\). Set
\[
x_1 h_2(\mathbf{x}_{\text{sub}}) = x_2 h_1(\mathbf{x}_{\text{sub}}), \quad x_1 h_1 + x_2 h_2 = A,
\]
with
\[
A = -\left[ \frac{1}{4\alpha\epsilon}(x_3^2 + x_4^2) + \alpha \left( \frac{L}{2}(x_1^2 + x_2^2) + \frac{C}{2}(x_3^2 + x_4^2) \right) \right].
\]
Then the controller is given by
\[
\begin{cases}
	h_1(\mathbf{x}_{\text{sub}}) = \dfrac{A x_1}{x_1^2 + x_2^2} = -x_1 \left( \dfrac{1}{4\alpha\epsilon} \dfrac{x_3^2 + x_4^2}{x_1^2 + x_2^2} + \dfrac{\alpha L}{2} + \dfrac{\alpha C}{2} \dfrac{x_3^2 + x_4^2}{x_1^2 + x_2^2} \right), \\[1em]
	h_2(\mathbf{x}_{\text{sub}}) = \dfrac{A x_2}{x_1^2 + x_2^2} = -x_2 \left( \dfrac{1}{4\alpha\epsilon} \dfrac{x_3^2 + x_4^2}{x_1^2 + x_2^2} + \dfrac{\alpha L}{2} + \dfrac{\alpha C}{2} \dfrac{x_3^2 + x_4^2}{x_1^2 + x_2^2} \right).
\end{cases}
\]

\begin{remark}
	Because the controller is applied at discrete time instants in practical application, the control input is not always equal to $\mathbf{h}(\mathbf{x}_{\text{sub}})$. So if \(x_1 = x_2 = 0\) or \(\frac{x_3^2 + x_4^2}{x_1^2 + x_2^2}\) is large, in order to prevent system divergence caused by excessive input error, we replace \(\frac{A}{x_1^2 + x_2^2}\) by \(\frac{1}{4\alpha\epsilon} + \frac{\alpha L}{2} + \frac{\alpha C}{2}\).
\end{remark}

\subsection{Input Error Robustness}
In practical circuit control, input errors are inevitable because control is applied at discrete time instants, and \(\mathbf{h}(\mathbf{x}_{\text{sub}})\) is evaluated based on the state variables at those instants. Therefore, we verify the robustness of the controller \(\mathbf{h}(\mathbf{x}_{\text{sub}})\) against input errors.

\begin{theorem}\label{ROB}
	Let \(\mathbf{u} = \mathbf{v} + \mathbf{h}(\mathbf{x}_{\text{sub}})\), where
	\[
	h_1 = \frac{A x_1}{x_1^2 + x_2^2}, \quad h_2 = \frac{A x_2}{x_1^2 + x_2^2}.
	\]
	Then
	\[
	H_0(\mathbf{x}_{\text{sub}}(T)) \le H_0(\mathbf{x}_{\text{sub}}(0)) e^{-\rho T} + \frac{\beta}{\rho} \sup \|\mathbf{d}\|^2 + \frac{1}{2\rho} \sup \|\mathbf{v}\|^2,
	\]
	with \(\rho = \alpha - \frac{1}{L}\). Moreover, if \(\alpha > \frac{1}{L}\), then \(H_0\) remains bounded.
\end{theorem}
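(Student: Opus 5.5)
Differentiate $H_0$ along the subsystem \eqref{subPHMod} with $\mathbf{u}=\mathbf{v}+\mathbf{h}(\mathbf{x}_{\text{sub}})$, split the result into the controlled part (already handled by Theorem \ref{ine}) and the error part coming from $\mathbf{v}$, absorb the error part by Young's inequality at the cost of lowering the decay rate from $\alpha$ to $\rho$, and then integrate with the comparison theorem exactly as after the ISS definition.

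First, since $J_{\text{sub}}$ is skew-symmetric, $\nabla H_0^\top J_{\text{sub}}\nabla H_0=0$. With $\nabla H_0=(Lx_1,Lx_2,Cx_3,Cx_4)^\top$ we have $\nabla H_0^\top B_{\text{sub}}\mathbf{d}=x_3 i_g^d+x_4 i_g^q$ and $\nabla H_0^\top C_{\text{sub}}\mathbf{u}=x_1u_1+x_2u_2$. Hence
\[
\frac{dH_0}{dt}=(x_1h_1+x_2h_2)+(x_1v_1+x_2v_2)+(x_3i_g^d+x_4i_g^q).
\]
By construction in Section \ref{controller}, $x_1h_1+x_2h_2=A$, which is exactly the choice making \eqref{condISS} hold with equality for $R=0$. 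Repeating the argument of Theorem \ref{ine} on the first and third terms therefore gives
\[
(x_1h_1+x_2h_2)+(x_3i_g^d+x_4i_g^q)\le -\alpha H_0+\beta\|\mathbf{d}\|^2 .
\]

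Next, bound the error term by Young's inequality:
\[
x_1v_1+x_2v_2\le \tfrac12(x_1^2+x_2^2)+\tfrac12\|\mathbf{v}\|^2 .
\]
Using $x_1^2+x_2^2=\frac{2}{L}\cdot\frac L2(x_1^2+x_2^2)\le \frac{2}{L}H_0$, this gives
\[
x_1v_1+x_2v_2\le \tfrac1L H_0+\tfrac12\|\mathbf{v}\|^2 .
\]
Combining the two bounds yields
\[
\frac{dH_0}{dt}\le -\Bigl(\alpha-\tfrac1L\Bigr)H_0+\beta\|\mathbf{d}\|^2+\tfrac12\|\mathbf{v}\|^2 = -\rho H_0+\beta\|\mathbf{d}\|^2+\tfrac12\|\mathbf{v}\|^2 .
\]
The weights in Young's inequality must be chosen precisely so that the coefficient comes out as $1/L$, matching $\rho=\alpha-\frac1L$ and the factor $\frac{1}{2\rho}$ on $\sup\|\mathbf{v}\|^2$.

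Finally, apply the comparison theorem to this linear differential inequality:
\[
H_0(T)\le H_0(0)e^{-\rho T}+\int_0^T e^{-\rho(T-\tau)}\bigl(\beta\|\mathbf{d}\|^2+\tfrac12\|\mathbf{v}\|^2\bigr)\,d\tau .
\]
When $\rho>0$, bounding the integrands by their suprema and using $\int_0^T e^{-\rho(T-\tau)}d\tau\le 1/\rho$ gives the stated estimate. Boundedness when $\alpha>1/L$ follows immediately, since then $\rho>0$ and the right-hand side is uniformly bounded in $T$. For $\rho\le0$ the inequality as written is not meaningful, so I would state the estimate under the assumption $\rho>0$.

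The main obstacle is the singularity of $\mathbf{h}$ at $x_1=x_2=0$. There I would either restrict attention to trajectories avoiding that set, or note that the Remark's replacement makes $x_1h_1+x_2h_2$ equal to $-\bigl(\frac{1}{4\alpha\epsilon}+\frac{\alpha L}{2}+\frac{\alpha C}{2}\bigr)(x_1^2+x_2^2)$. That replacement does not directly dominate the $x_3,x_4$ terms, so the clean argument relies on the unmodified $\mathbf{h}$ as stated in the theorem.
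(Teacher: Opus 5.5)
Your proof is correct and is essentially the paper's argument. The paper likewise invokes Theorem~\ref{ine} for the $\mathbf{h}$- and $\mathbf{d}$-terms, bounds $x_1v_1+x_2v_2\le\sqrt{\tfrac{2}{L}H_0}\,\|\mathbf{v}\|$ by Cauchy--Schwarz and then uses $ab\le\tfrac12(a^2+b^2)$, and finishes with the comparison theorem; you apply Young first and then $x_1^2+x_2^2\le\tfrac{2}{L}H_0$, which gives identical constants. Your caveats go beyond the paper, which states the bound without assuming $\rho>0$ (for $\rho<0$ it is actually false, since its right-hand side can be negative) and never addresses the singularity of the unmodified $\mathbf{h}$ on $x_1=x_2=0$.
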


\begin{proof}
	By theorem \ref{ine}, let \(\mathbf{u} = \mathbf{h}(\mathbf{x}_{\text{sub}}) + \mathbf{v}\), where \(\mathbf{h}\) satisfies \eqref{condISS}. Then
	\[
	\begin{aligned}
		\frac{dH_0}{dt} &\le -\alpha H_0 + \beta \|\mathbf{d}\|^2 + (x_1 v_1 + x_2 v_2) \\
		&\le -\alpha H_0 + \beta \|\mathbf{d}\|^2 + \sqrt{x_1^2 + x_2^2} \|\mathbf{v}\| \\
		&\le -\alpha H_0 + \beta \|\mathbf{d}\|^2 + \sqrt{\frac{2}{L} H_0} \|\mathbf{v}\| \\
		&\le \left(-\alpha + \frac{1}{L}\right) H_0 + \beta \|\mathbf{d}\|^2 + \frac{1}{2} \|\mathbf{v}\|^2.
	\end{aligned}
	\]
	By the second comparison theorem, setting \(\rho = \alpha - \frac{1}{L}\), we obtain
	\[
	H_0(\mathbf{x}_{\text{sub}}(T)) \le H_0(\mathbf{x}_{\text{sub}}(0)) e^{-\rho T} + \frac{\beta}{\rho} \sup \|\mathbf{d}\|^2 + \frac{1}{2\rho} \sup \|\mathbf{v}\|^2.
	\]
	If \(\alpha > \frac{1}{L}\), then \(\rho > 0\) and as \(T \to +\infty\),
	\[
	H_0(\mathbf{x}_{\text{sub}}(T)) \le \frac{\beta}{\rho} \sup \|\mathbf{d}\|^2 + \frac{1}{2\rho} \sup \|\mathbf{v}\|^2.
	\]
\end{proof}

Theorem \ref{ROB} reveals the robustness of the feedback controller \(\mathbf{h}(\mathbf{x}_{\text{sub}})\). If \(\alpha > \frac{1}{L}\), then \(H_0(\mathbf{x}_{\text{sub}}(T)) = O(\|\mathbf{d}\|^2 + \|\mathbf{v}\|^2)\) as \(T \to +\infty\), meaning that \(\mathbf{h}(\mathbf{x}_{\text{sub}})\) has strong robustness against errors. Hence, the controller is feasible in practice.

Having established a robust feedback controller that ensures input‑to‑state stability and practical feasibility, we now turn to the numerical simulation of the closed‑loop system. Accurate long‑term simulation is essential for verifying control performance and predicting system behavior. However, standard numerical methods may artificially inject or dissipate energy, thereby obscuring the true physical response. To overcome this issue, the next section introduces a numerical algorithm that preserves the underlying Dirac structure of the port‑Hamiltonian system.

\section{A Dirac-structure-preserving numerical algorithm}\label{algorithm}

The Dirac structure is a geometric object that encodes how energy flows among the storage, resistive and external ports of a port‑Hamiltonian system~\cite{van2020port,kumar2025port}. Preserving this structure in numerical simulations is crucial for two reasons. First, it guarantees that the discrete‑time evolution respects the same energy balance relations as the continuous‑time system, thereby preventing unphysical energy drift over long simulation horizons. Second, it ensures that the qualitative properties of the controlled system, such as stability and passivity, are faithfully reproduced by the numerical scheme. Therefore, this section develops a Dirac‑structure‑preserving algorithm for the grid‑forming SVG model. The key idea is to replace the continuous‑time energy balance equation with a discrete counterpart that exactly conserves the Hamiltonian energy when external disturbances are absent.

\subsection{Dirac Structure of Port-Hamilton System}

\begin{definition}\label{Dirac}
	Let \(M\) be a smooth manifold and consider a vector bundle \(\mathbb{V}\) whose fibers are \(\mathbb{V}_{\mathbf{x}} = T_{\mathbf{x}} M \times T_{\mathbf{x}}^* M\), where \(T_{\mathbf{x}} M\) denotes the tangent space at \(\mathbf{x} \in M\) and \(T_{\mathbf{x}}^* M\) denotes the cotangent space. Define a bilinear form on \(\mathbb{V}_{\mathbf{x}}\) by
	\[
	\langle (X,\alpha), (Y,\beta) \rangle = \beta(X) + \alpha(Y),
	\]
	with \(X,Y \in T_{\mathbf{x}} M\) and \(\alpha,\beta \in T_{\mathbf{x}}^* M\). A Dirac structure \(\mathbb{D}\) is a smooth subbundle of \(\mathbb{V}\) such that each fiber \(\mathbb{D}_{\mathbf{x}}\) is a maximally isotropic subspace of \(\mathbb{V}_{\mathbf{x}}\), i.e.
	\[
	\mathbb{D}_{\mathbf{x}} = \{ \mathbf{v} \in \mathbb{V}_{\mathbf{x}} \mid \langle \mathbf{v}, \mathbf{w} \rangle = 0,\; \forall \mathbf{w} \in \mathbb{D}_{\mathbf{x}} \}.
	\]
\end{definition}

Lemma \ref{Dpro} illustrates the maximality of Dirac structure as isotropic subbundles, and theorem \ref{inD} shows that any isotropic subbundle is contained in a Dirac structure, thereby theoretically affirming the feasibility of designing Dirac-structure-preserving algorithms for port-Hamilton systems~\cite{van2020port,kumar2025port}.

\begin{lemma}\label{Dpro}
	Every fiber of a Dirac structure \(\mathbb{D}\) satisfies:
	\begin{itemize}
		\item[(a)] \(\langle \cdot,\cdot \rangle|_{\mathbb{D}_{\mathbf{x}}} = 0\);
		\item[(b)] There is no vector space \(\mathbb{E} \supseteq \mathbb{D}_{\mathbf{x}}\) such that \(\langle \cdot,\cdot \rangle|_{\mathbb{E}} = 0\);
		\item[(c)] \(\dim(\mathbb{D}_{\mathbf{x}}) = \dim(M)\).
	\end{itemize}
\end{lemma}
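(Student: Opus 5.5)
The plan is to work fiberwise. Fix $\mathbf{x}\in M$, let $n=\dim M$, and write $V=\mathbb{V}_{\mathbf{x}}=T_{\mathbf{x}}M\times T^*_{\mathbf{x}}M$, a real vector space of dimension $2n$. The defining property of a Dirac structure is $\mathbb{D}_{\mathbf{x}}=\mathbb{D}_{\mathbf{x}}^{\perp}$, where $\perp$ denotes the orthogonal complement with respect to $\langle\cdot,\cdot\rangle$. Each of (a)--(c) will be read off from this identity together with nondegeneracy of the form. So the first step is to check that $\langle\cdot,\cdot\rangle$ is a nondegenerate symmetric bilinear form on $V$. Symmetry is immediate from the definition. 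For nondegeneracy, suppose $\langle (X,\alpha),(Y,\beta)\rangle=\beta(X)+\alpha(Y)=0$ for all $(Y,\beta)$. Taking $Y=0$ gives $\beta(X)=0$ for all $\beta$, hence $X=0$; taking $\beta=0$ gives $\alpha(Y)=0$ for all $Y$, hence $\alpha=0$.

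Part (a) is immediate from $\mathbb{D}_{\mathbf{x}}\subseteq\mathbb{D}_{\mathbf{x}}^{\perp}$: any $\mathbf{v}\in\mathbb{D}_{\mathbf{x}}$ pairs to zero with every $\mathbf{w}\in\mathbb{D}_{\mathbf{x}}$, so $\langle\cdot,\cdot\rangle|_{\mathbb{D}_{\mathbf{x}}}=0$. Part (b) should be read with $\mathbb{E}\supsetneq\mathbb{D}_{\mathbf{x}}$, since $\mathbb{E}=\mathbb{D}_{\mathbf{x}}$ trivially satisfies the condition. Suppose $\mathbb{E}\supseteq\mathbb{D}_{\mathbf{x}}$ and $\langle\cdot,\cdot\rangle|_{\mathbb{E}}=0$, and take $\mathbf{e}\in\mathbb{E}$. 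Then $\mathbf{e}$ pairs to zero with every element of $\mathbb{E}$, in particular with every element of $\mathbb{D}_{\mathbf{x}}$, so $\mathbf{e}\in\mathbb{D}_{\mathbf{x}}^{\perp}=\mathbb{D}_{\mathbf{x}}$. Hence $\mathbb{E}=\mathbb{D}_{\mathbf{x}}$, and no proper isotropic enlargement exists.

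Part (c) is the main step. By nondegeneracy, the map $V\to V^*$ given by $\mathbf{v}\mapsto\langle\mathbf{v},\cdot\rangle$ is an isomorphism. Hence for any subspace $W\subseteq V$ we have $\dim W^{\perp}=\dim V-\dim W$, because $W^\perp$ is the preimage of the annihilator of $W$. Applying this with $W=\mathbb{D}_{\mathbf{x}}=\mathbb{D}_{\mathbf{x}}^\perp$ gives $\dim\mathbb{D}_{\mathbf{x}}=2n-\dim\mathbb{D}_{\mathbf{x}}$, so $\dim\mathbb{D}_{\mathbf{x}}=n=\dim M$.

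I expect the only real obstacle to be a careful statement of the dimension formula $\dim W^\perp=\dim V-\dim W$, which rests on the nondegeneracy check from the first step; once that is in place, (a)--(c) are short.
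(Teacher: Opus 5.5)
Your proof is correct and matches the paper's argument: (a) from $\mathbb{D}_{\mathbf{x}}\subseteq\mathbb{D}_{\mathbf{x}}^{\perp}$, (b) from $\mathbb{D}_{\mathbf{x}}^{\perp}=\mathbb{D}_{\mathbf{x}}$ with (b) read as strict containment, as the paper also does, and (c) from $\dim\mathbb{D}_{\mathbf{x}}^{\perp}=2n-\dim\mathbb{D}_{\mathbf{x}}$ via nondegeneracy. The only differences are cosmetic: you check nondegeneracy directly instead of citing the Gram matrix $\begin{bmatrix}0&I_n\\I_n&0\end{bmatrix}$, and you argue (b) directly rather than by contradiction.
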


\begin{proof}
	Property (a) follows directly from definition \ref{Dirac}.
	
	If there is a vector space $\mathbb{E}\supsetneqq \mathbb{D}_\mathbf{x}$ such that $\langle \cdot,\cdot\rangle\big|_{\mathbb{E}}=0$, then there is some $\mathbf{v}\notin\mathbb{D}_\mathbf{x}$ such that $\langle \mathbf{v},\mathbf{w}\rangle=0,\forall \mathbf{w}\in\mathbb{D}_\mathbf{x}$, contradicting definition \ref{Dirac}.
	
	Suppose $\dim M=n$, then the bilinear form on $\mathbb{V}_\mathbf{x}$ can be represented by $\begin{bmatrix}
		0&I_n\\I_n&0
	\end{bmatrix}$. Set $\mathbb{D}_\mathbf{x}^\perp=\{\mathbf{v}\in\mathbb{V}\big| \langle\mathbf{v},\mathbf{w}\rangle=0,\forall \mathbf{w}\in\mathbb{D}_\mathbf{x}\}$, then by the non-singularity of $\begin{bmatrix}
	0&I_n\\I_n&0
	\end{bmatrix}$, $\dim(\mathbb{D}_\mathbf{x}^\perp)=\dim(\mathbb{V}_\mathbf{x})-\dim(\mathbb{D}_\mathbf{x})=2n-\dim(\mathbb{D}_\mathbf{x})$. Moreover, by definition \ref{Dirac} $\mathbb{D}_\mathbf{x}^\perp=\mathbb{D}_\mathbf{x}$, so $\dim(\mathbb{D}_\mathbf{x})=2n-\dim(\mathbb{D}_\mathbf{x})$. The solution of this equation is $\dim(\mathbb{D}_\mathbf{x})=n$.
\end{proof}

\begin{theorem}\label{inD}
	If \(\mathbb{L} \subset \mathbb{V}_{\mathbf{x}}\) satisfies \(\langle \cdot,\cdot \rangle|_{\mathbb{L}} = 0\), then there exists a subspace \(\mathbb{D}_{\mathbf{x}} = \{ \mathbf{v} \in \mathbb{V}_{\mathbf{x}} \mid \langle \mathbf{v}, \mathbf{w} \rangle = 0,\; \forall \mathbf{w} \in \mathbb{D}_{\mathbf{x}} \}\) such that \(\mathbb{L} \subset \mathbb{D}_{\mathbf{x}}\).
\end{theorem}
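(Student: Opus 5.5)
Since $\mathbb{V}_{\mathbf{x}} = T_{\mathbf{x}}M\times T_{\mathbf{x}}^*M$ is a $2n$-dimensional space with $n=\dim M$, and $\langle\cdot,\cdot\rangle$ is symmetric and nondegenerate there (its matrix is $\begin{bmatrix}0&I_n\\I_n&0\end{bmatrix}$, as in the proof of lemma \ref{Dpro}), we are in finite-dimensional linear algebra. We may assume $\mathbb{L}$ is a linear subspace, replacing it by its span if necessary; the span is still isotropic by bilinearity. The plan is to enlarge $\mathbb{L}$ to an isotropic subspace that is maximal with respect to inclusion, and then show that every such maximal isotropic subspace $\mathbb{D}_{\mathbf{x}}$ satisfies $\mathbb{D}_{\mathbf{x}}^\perp=\mathbb{D}_{\mathbf{x}}$.

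\textbf{Step 1 (existence of a maximal extension).} Consider the family of isotropic subspaces containing $\mathbb{L}$. It is nonempty because it contains $\mathbb{L}$. Every member has dimension at most $2n$. So we can pick a member $\mathbb{D}_{\mathbf{x}}$ of largest dimension, and it is maximal under inclusion. No Zorn's lemma is needed.

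\textbf{Step 2 ($\mathbb{D}_{\mathbf{x}}^\perp=\mathbb{D}_{\mathbf{x}}$).} Isotropy gives $\mathbb{D}_{\mathbf{x}}\subseteq\mathbb{D}_{\mathbf{x}}^\perp$. For the reverse inclusion, suppose $\mathbf{v}\in\mathbb{D}_{\mathbf{x}}^\perp\setminus\mathbb{D}_{\mathbf{x}}$. If $\langle\mathbf{v},\mathbf{v}\rangle=0$, then $\mathbb{D}_{\mathbf{x}}+\operatorname{span}\{\mathbf{v}\}$ is isotropic and strictly larger, contradicting maximality. This isotropic case is immediate.

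\textbf{Step 3 (the non-isotropic case, the main obstacle).} Over $\mathbb{R}$ a symmetric form can have anisotropic vectors, so we must exploit the split signature $(n,n)$ of $\langle\cdot,\cdot\rangle$. Let $W=\mathbb{D}_{\mathbf{x}}^\perp$. Nondegeneracy gives $\dim W=2n-\dim\mathbb{D}_{\mathbf{x}}$. The radical of the form restricted to $W$ is $W\cap W^\perp=\mathbb{D}_{\mathbf{x}}$. Hence the form descends to a nondegenerate form on $W/\mathbb{D}_{\mathbf{x}}$.

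I will show that this quotient form again has split signature. Write $k=\dim\mathbb{D}_{\mathbf{x}}$. Choose $\mathbb{D}'$ isotropic and paired nondegenerately with $\mathbb{D}_{\mathbf{x}}$, so that $\mathbb{D}_{\mathbf{x}}\oplus\mathbb{D}'$ is a hyperbolic space of signature $(k,k)$. Its orthogonal complement is then isomorphic to $W/\mathbb{D}_{\mathbf{x}}$ and has signature $(n-k,n-k)$ by Sylvester's law of inertia.

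If $W/\mathbb{D}_{\mathbf{x}}\neq0$, pick vectors $\mathbf{p}$ and $\mathbf{q}$ in it with $\langle\mathbf{p},\mathbf{p}\rangle>0$ and $\langle\mathbf{q},\mathbf{q}\rangle<0$ that are orthogonal to each other. A suitable combination $\mathbf{p}+t\mathbf{q}$ is then a nonzero isotropic vector. Lifting it to $W$ gives a vector outside $\mathbb{D}_{\mathbf{x}}$ that falls under Step 2's isotropic case, again contradicting maximality. Therefore $W=\mathbb{D}_{\mathbf{x}}$, which gives $\dim\mathbb{D}_{\mathbf{x}}=n$ and the required identity $\mathbb{D}_{\mathbf{x}}=\{\mathbf{v}\mid\langle\mathbf{v},\mathbf{w}\rangle=0\ \forall\mathbf{w}\in\mathbb{D}_{\mathbf{x}}\}$ with $\mathbb{L}\subset\mathbb{D}_{\mathbf{x}}$.

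\textbf{Alternative for Step 3.} The signature bookkeeping is the delicate part. If it becomes awkward, I would instead argue constructively. Choose a basis in which $\mathbb{L}$ is spanned by $e_1,\dots,e_k$, and use the standard Witt decomposition of the split form to complete it by $e_{k+1},\dots,e_n$ spanning a Lagrangian subspace.
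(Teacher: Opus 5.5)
Your proof is correct, but the key step takes a different route from the paper's.

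Both proofs first take an inclusion-maximal isotropic $\mathbb{D}_{\mathbf{x}}\supseteq\mathbb{L}$. The paper uses Zorn's lemma; your dimension bound is simpler and entirely adequate, since $\mathbb{V}_{\mathbf{x}}$ is finite-dimensional. Both then reduce $\mathbb{D}_{\mathbf{x}}^\perp=\mathbb{D}_{\mathbf{x}}$ to finding an isotropic vector in $\mathbb{D}_{\mathbf{x}}^\perp\setminus\mathbb{D}_{\mathbf{x}}$ when $\dim\mathbb{D}_{\mathbf{x}}<n$.

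The paper finds that vector inside the Lagrangian $T_{\mathbf{x}}M\times\{0\}$. Let $\mathbb{W}$ be the set of pairs $(X,\mathbf{0})$ with $X$ annihilated by every covector component of $\mathbb{D}_{\mathbf{x}}$, and let $p(Y,\alpha)=(\mathbf{0},\alpha)$. Then $\mathbb{W}$ is isotropic, lies in $\mathbb{D}_{\mathbf{x}}^\perp$, and has dimension $n-\dim p(\mathbb{D}_{\mathbf{x}})$. If $\mathbb{W}$ were contained in $\mathbb{D}_{\mathbf{x}}$, it would sit in $\ker p|_{\mathbb{D}_{\mathbf{x}}}$, and rank--nullity would give $n\le\dim\mathbb{D}_{\mathbf{x}}<n$. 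This needs only rank--nullity and the product structure of $\mathbb{V}_{\mathbf{x}}$.

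You instead show, coordinate-free, that $\mathbb{D}_{\mathbf{x}}^\perp/\mathbb{D}_{\mathbf{x}}$ carries a nondegenerate form of signature $(n-k,n-k)$, so it contains an isotropic vector whenever it is nonzero. Your route works for any real nondegenerate symmetric form of split signature, with no distinguished Lagrangian needed. It also tells you the structure of the quotient. The price is that it relies on two facts you only assert:
\begin{itemize}
\item The existence of an isotropic $\mathbb{D}'$ in nondegenerate duality with $\mathbb{D}_{\mathbf{x}}$. For a basis $d_1,\dots,d_k$ of $\mathbb{D}_{\mathbf{x}}$, pick $f_i$ with $\langle f_i,d_j\rangle=\delta_{ij}$ (possible by nondegeneracy) and set $d_i'=f_i-\tfrac12\sum_j\langle f_i,f_j\rangle d_j$.
\item The isometry $(\mathbb{D}_{\mathbf{x}}\oplus\mathbb{D}')^\perp\cong\mathbb{D}_{\mathbf{x}}^\perp/\mathbb{D}_{\mathbf{x}}$. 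The complement lies in $\mathbb{D}_{\mathbf{x}}^\perp$, meets $\mathbb{D}_{\mathbf{x}}$ trivially because $\mathbb{D}_{\mathbf{x}}$ pairs nondegenerately with $\mathbb{D}'$, and both spaces have dimension $2n-2k$.
\end{itemize}
Each is a one-line check, so there is no gap. The paper's argument is shorter because $T_{\mathbf{x}}M\times\{0\}$ supplies the split structure directly.
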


\begin{proof}
    From lemma \ref{Dpro} a Dirac structure is a maximum isotropic subbundle whose fibers have the same dimension as the origin manifold $M$. Now prove the existence of the maximum isotropic subbundle, and then prove that the maximum isotropic subbundle is the Dirac structure from the perspective of dimension.

	Let $\mathcal{B}=\{\mathbb{E}_\mathbf{x}\subset\mathbb{V}_\mathbf{x}\big| \mathbb{L}\subset\mathbb{E}_\mathbf{x},\langle\cdot,\cdot\rangle\big|_{\mathbb{E}_\mathbf{x}}=0\}$ be a family of subspace, and let $\subset$ be the partial relation. For each chain $\mathcal{C}\subset\mathcal{B}$ from the partial relation, suppose $\mathbb{C}_{\mathbf{x}}=\bigcup\limits_{\mathbb{E}_\mathbf{x}\in\mathcal{C}}\mathbb{E}_{\mathbf{x}}$, then $\mathbb{C}_\mathbf{x}\subset\mathbb{V}_\mathbf{x}$ is a vector space satisfying that $\langle\cdot,\cdot\rangle\big|_{\mathbb{C}_\mathbf{x}}=0$. So $\mathbb{C}_\mathbf{x}$ is the upper bound of $\mathcal{C}$, which means that every chain has an upper bound. By Zorn's lemma, there is a maximal element $\mathbb{D}_\mathbf{x}\in\mathcal{B}$.
	
	Since $\mathbb{D}_\mathbf{x}\in\mathcal{B}$, we see $\mathbb{D}_\mathbf{x}^\perp=\{\mathbf{v}\in \mathbb{V}_\mathbf{x}\big| \langle \mathbf{v},\mathbf{w}\rangle=0,\forall \mathbf{w}\in \mathbb{D}_\mathbf{x}\}\supset\mathbb{D}_\mathbf{x}$. If $\mathbb{D}_\mathbf{x}^\perp\neq\mathbb{D}_\mathbf{x}$, suppose $\dim(M)=n$, then $\dim(\mathbb{D}_\mathbf{x}^\perp)=2n-\dim(\mathbb{D}_\mathbf{x})>\dim(\mathbb{D}_\mathbf{x})$, which means $\dim(\mathbb{D}_\mathbf{x})<n$ and $\dim(\mathbb{D}_\mathbf{x}^\perp)>n$. Moreover, if $\mathbf{v}=(X,\mathbf{0})$ in which $X\in T_\mathbf{x}M$ satisfies $\alpha(X)=0,\forall \mathbf{w}=(Y,\alpha)\in\mathbb{D}_x$, then $\mathbf{v}\in\mathbb{D}_\mathbf{x}^\perp$ and $\langle\mathbf{v},\mathbf{v}\rangle=0$. Set $\mathbb{W}=\{\mathbf{v}=(X,\mathbf{0})\big| \alpha(X)=0,\forall \mathbf{w}=(Y,\alpha)\in\mathbb{D}_\mathbf{x}\}$, then $\dim(\mathbb{W})+\dim(p(\mathbb{D}_\mathbf{x}))=n$, in which $p((Y,\alpha))=(0,\alpha)$. If $\mathbb{W}\subset\mathbb{D}_\mathbf{x}$, then $\dim(\mathbb{W})+\dim(p(\mathbb{D}_\mathbf{x}))\leq\dim(\mathbb{D}_\mathbf{x})<n$, which is contradictory with $\dim(\mathbb{W})+\dim(p(\mathbb{D}_\mathbf{x}))=n$. So there is a vector $\mathbf{v}\in\mathbb{W}\backslash\mathbb{D}_\mathbf{x}$. However,$\mathbf{v}$ satisfies $\langle\mathbf{v},\mathbf{v}\rangle=0$ and $\mathbf{v}\in\mathbb{D}_\mathbf{x}^\perp\backslash\mathbb{D}_\mathbf{x}$, which is contradictory with the maximality of $\mathbb{D}_\mathbf{x}$. So $\mathbb{D}_{\mathbf{x}}=\mathbb{D}_\mathbf{x}^\perp=\{\mathbf{v}\in \mathbb{V}_\mathbf{x}\big| \langle \mathbf{v},\mathbf{w}\rangle=0,\forall \mathbf{w}\in \mathbb{D}_\mathbf{x}\}$.
\end{proof}

In order to facilitate the study of control systems, the extended Dirac structure is defined below~\cite{van2020port,kong2023control}.

\begin{definition}\label{extDirac}
	Let \(M\) be a smooth manifold and consider an extended vector bundle \(\mathbb{V}^{\text{ext}}\) with fibers
	\[
	\mathbb{V}_{\mathbf{x}}^{\text{ext}} = T_{\mathbf{x}} M \times T_{\mathbf{x}}^* M \times \mathbb{R}^m \times (\mathbb{R}^m)^*.
	\]
	Define a bilinear form on \(\mathbb{V}_{\mathbf{x}}^{\text{ext}}\) by
	\[
	\langle (\mathbf{f}_1,\mathbf{e}_1,\mathbf{u}_1,\mathbf{y}_1), (\mathbf{f}_2,\mathbf{e}_2,\mathbf{u}_2,\mathbf{y}_2) \rangle = \mathbf{e}_2(\mathbf{f}_1) + \mathbf{y}_2(\mathbf{u}_1) + \mathbf{e}_1(\mathbf{f}_2) + \mathbf{y}_1(\mathbf{u}_2).
	\]
	An extended Dirac structure \(\mathbb{D}^{\text{ext}}\) is a smooth subbundle of \(\mathbb{V}^{\text{ext}}\) such that each fiber \(\mathbb{D}_{\mathbf{x}}^{\text{ext}}\) is a maximally isotropic subspace of $\mathbb{V}_\mathbf{x}^{\text{ext}}$, i.e.\[\mathbb{D}_{\mathbf{x}}^{\text{ext}} = \left\{ \mathbf{v} \in \mathbb{V}_{\mathbf{x}}^{\text{ext}} \mid \langle \mathbf{v}, \mathbf{w} \rangle = 0,\; \forall \mathbf{w} \in \mathbb{D}_{\mathbf{x}}^{\text{ext}} \right\}.\]
\end{definition}

From the same proof as the proof of lemma \ref{Dpro} and theorem \ref{inD} we can find the lemma and the theorem below:

\begin{lemma}\label{extDpro}
	Every fiber of an extended Dirac structure satisfies:
	\begin{itemize}
		\item[(a)] \(\langle \cdot,\cdot \rangle|_{\mathbb{D}_{\mathbf{x}}^{\text{ext}}} = 0\);
		\item[(b)] No proper superspace has zero bilinear form;
		\item[(c)] \(\dim(\mathbb{D}_{\mathbf{x}}^{\text{ext}}) = \dim(M) + m\).
	\end{itemize}
\end{lemma}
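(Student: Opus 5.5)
The plan is to follow the proof of Lemma \ref{Dpro} almost line by line, with $\mathbb{V}_{\mathbf{x}}$ replaced by $\mathbb{V}_{\mathbf{x}}^{\text{ext}}$. The only new input is that the extended bilinear form is again nondegenerate. Throughout, write $n=\dim(M)$ and note that $\dim(\mathbb{V}_{\mathbf{x}}^{\text{ext}})=2n+2m$.

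\textbf{Parts (a) and (b).} Property (a) is immediate from Definition \ref{extDirac}: every $\mathbf{v}\in\mathbb{D}_{\mathbf{x}}^{\text{ext}}$ is orthogonal to every $\mathbf{w}\in\mathbb{D}_{\mathbf{x}}^{\text{ext}}$, so the form vanishes identically on $\mathbb{D}_{\mathbf{x}}^{\text{ext}}$. For (b), argue by contradiction. Suppose $\mathbb{E}\supsetneqq\mathbb{D}_{\mathbf{x}}^{\text{ext}}$ is isotropic, and pick $\mathbf{v}\in\mathbb{E}\setminus\mathbb{D}_{\mathbf{x}}^{\text{ext}}$. Isotropy of $\mathbb{E}$ gives $\langle\mathbf{v},\mathbf{w}\rangle=0$ for all $\mathbf{w}\in\mathbb{D}_{\mathbf{x}}^{\text{ext}}$. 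By the defining equality in Definition \ref{extDirac} this forces $\mathbf{v}\in\mathbb{D}_{\mathbf{x}}^{\text{ext}}$, which is a contradiction.

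\textbf{Part (c).} Choose dual bases of $T_{\mathbf{x}}M$ and $T_{\mathbf{x}}^*M$, and of $\mathbb{R}^m$ and $(\mathbb{R}^m)^*$. Order the coordinates as $(\mathbf{f},\mathbf{u},\mathbf{e},\mathbf{y})$. In these coordinates the form becomes
\[
\langle \mathbf{a},\mathbf{b}\rangle=\mathbf{a}^\top\begin{bmatrix}0&I_{n+m}\\ I_{n+m}&0\end{bmatrix}\mathbf{b},
\]
because $\mathbf{e}_2(\mathbf{f}_1)+\mathbf{y}_2(\mathbf{u}_1)$ pairs the first block of $\mathbf{a}$ with the second block of $\mathbf{b}$, and the remaining two terms do the symmetric job. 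This matrix is nonsingular. Hence for any subspace $\mathbb{D}$ we have $\dim(\mathbb{D}^\perp)=\dim(\mathbb{V}_{\mathbf{x}}^{\text{ext}})-\dim(\mathbb{D})$. Applying this to $\mathbb{D}=\mathbb{D}_{\mathbf{x}}^{\text{ext}}$ and using $\mathbb{D}_{\mathbf{x}}^{\text{ext}}=(\mathbb{D}_{\mathbf{x}}^{\text{ext}})^\perp$ gives
\[
\dim(\mathbb{D}_{\mathbf{x}}^{\text{ext}})=2n+2m-\dim(\mathbb{D}_{\mathbf{x}}^{\text{ext}}),
\]
so $\dim(\mathbb{D}_{\mathbf{x}}^{\text{ext}})=n+m$.

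\textbf{Main obstacle.} The one step needing care is the nondegeneracy claim in (c), that is, writing the extended form as the block matrix above. Once the coordinates are reordered so that all "flow-like" variables $(\mathbf{f},\mathbf{u})$ come first and all "effort-like" variables $(\mathbf{e},\mathbf{y})$ come second, this reduces to the computation already used in Lemma \ref{Dpro}. The dimension formula for orthogonal complements under a nondegenerate form then finishes the argument.
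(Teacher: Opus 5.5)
Your proposal is correct and follows the paper's approach. The paper proves this lemma by saying it repeats the proof of Lemma~\ref{Dpro}: (a) from self-orthogonality in the definition, (b) by contradiction, and (c) from nondegeneracy of the block form $\begin{bmatrix}0&I\\ I&0\end{bmatrix}$ together with $\dim(\mathbb{D}^\perp)=\dim(\mathbb{V}^{\text{ext}}_{\mathbf{x}})-\dim(\mathbb{D})$ and $\mathbb{D}^\perp=\mathbb{D}$; your reordering of coordinates into $(\mathbf{f},\mathbf{u},\mathbf{e},\mathbf{y})$ just makes explicit the block-matrix step the paper leaves implicit.
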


\begin{theorem}\label{inDext}
	If \(\mathbb{L} \subset \mathbb{V}_{\mathbf{x}}^{\text{ext}}\) satisfies \(\langle \cdot,\cdot \rangle|_{\mathbb{L}} = 0\), then there exists a subspace \(\mathbb{D}_{\mathbf{x}}^{\text{ext}}\) containing \(\mathbb{L}\) such that \(\mathbb{D}_{\mathbf{x}}^{\text{ext}} = \{\mathbf{v} \mid \langle \mathbf{v}, \mathbf{w} \rangle = 0,\; \forall \mathbf{w} \in \mathbb{D}_{\mathbf{x}}^{\text{ext}}\}\).
\end{theorem}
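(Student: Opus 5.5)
The argument for theorem \ref{inD} carries over once we identify $\mathbb{V}_{\mathbf{x}}^{\text{ext}}$ with a space of the form $W\times W^*$. Set $W = T_{\mathbf{x}}M\times\mathbb{R}^m$, so that $W^* = T^*_{\mathbf{x}}M\times(\mathbb{R}^m)^*$. Reordering components as $((\mathbf{f},\mathbf{u}),(\mathbf{e},\mathbf{y}))$ turns the bilinear form of definition \ref{extDirac} into the canonical pairing $\langle (X,\alpha),(Y,\beta)\rangle=\beta(X)+\alpha(Y)$ on $W\times W^*$. The form is therefore symmetric and nondegenerate. With $N=\dim W=n+m$, where $n=\dim M$, its matrix is $\begin{bmatrix}0&I_N\\I_N&0\end{bmatrix}$. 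This identification also yields lemma \ref{extDpro}(c) by the same dimension count used in lemma \ref{Dpro}.

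\textbf{Step 1 (maximal isotropic extension).} Let $\mathcal{B}$ be the family of isotropic subspaces of $\mathbb{V}_{\mathbf{x}}^{\text{ext}}$ containing $\mathbb{L}$, ordered by inclusion. The family is nonempty because $\mathbb{L}\in\mathcal{B}$. Since the ambient space has finite dimension $2N$, a maximal element $\mathbb{D}$ exists: pick an element of largest dimension. Zorn's lemma, as used in the proof of theorem \ref{inD}, also works, since the union of a chain of isotropic subspaces is again an isotropic subspace. Isotropy gives $\mathbb{D}\subset\mathbb{D}^\perp$. Nondegeneracy gives $\dim\mathbb{D}^\perp = 2N-\dim\mathbb{D}$.

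\textbf{Step 2 (maximal isotropic implies $\mathbb{D}=\mathbb{D}^\perp$).} This is the main step. I would avoid the paper's route through vectors of the form $(X,\mathbf{0})$ and argue directly by contradiction. Suppose $\mathbb{D}\subsetneq\mathbb{D}^\perp$. The form restricted to $\mathbb{D}^\perp$ has radical containing $\mathbb{D}$, so it descends to a symmetric form on $Q=\mathbb{D}^\perp/\mathbb{D}$. This induced form is nondegenerate, because $(\mathbb{D}^\perp)^\perp=\mathbb{D}$. So $Q$ is a nonzero space of dimension $2N-2\dim\mathbb{D}$, which is even, and it carries a nondegenerate symmetric form of split signature. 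The signature is split because the full form has signature $(N,N)$ and $\mathbb{D}\oplus(\text{complement})$ accounts for equal positive and negative parts.

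The only delicate point is producing a nonzero isotropic vector in $Q$. I would get it concretely rather than through signature bookkeeping. Since the form on $Q$ is nondegenerate and $Q\neq 0$, choose $\bar{a}$ with $\langle\bar{a},\bar{a}\rangle\neq 0$. Then the split-signature fact supplies $\bar{b}$ with $\langle\bar{b},\bar{b}\rangle$ of opposite sign, and a suitable combination $\bar{a}+t\bar{b}$ is isotropic. If instead every vector of $Q$ has $\langle\bar{a},\bar{a}\rangle=0$, then any nonzero vector is already isotropic.

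Lift the isotropic vector to $\mathbf{v}\in\mathbb{D}^\perp\setminus\mathbb{D}$ with $\langle\mathbf{v},\mathbf{v}\rangle=0$. Then $\mathbb{D}+\mathrm{span}\{\mathbf{v}\}$ is isotropic, contains $\mathbb{L}$, and strictly contains $\mathbb{D}$. This contradicts maximality, so $\mathbb{D}=\mathbb{D}^\perp$.

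A cleaner alternative to the signature argument uses the fact that $\mathbb{V}_{\mathbf{x}}^{\text{ext}}$ already contains the Lagrangian subspace $W\times\{0\}$. The subspace $\mathbb{D}+\big((W\times\{0\})\cap\mathbb{D}^\perp\big)$ is isotropic: the first summand is isotropic, the second lies in an isotropic subspace, and the cross terms vanish because the second summand lies in $\mathbb{D}^\perp$. Its dimension is at least $\dim\mathbb{D}+N-\dim\mathbb{D}-\dim(\mathbb{D}\cap(W\times\{0\}))$. Hence it strictly exceeds $\mathbb{D}$ whenever $\dim\mathbb{D}<N$, and maximality forces $\dim\mathbb{D}= N=\dim\mathbb{D}^\perp$. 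This is essentially the paper's argument with $(X,\mathbf{0})$ and $p$, and I would present this version.

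Setting $\mathbb{D}_{\mathbf{x}}^{\text{ext}}=\mathbb{D}$ then gives the required subspace. It contains $\mathbb{L}$ and satisfies the self-orthogonality identity in the statement.
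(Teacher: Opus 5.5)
Your plan is the paper's argument moved to $W=T_{\mathbf{x}}M\times\mathbb{R}^m$: take a maximal isotropic extension, then prove $\mathbb{D}=\mathbb{D}^\perp$ using the isotropic subspace $W\times\{0\}$. However, the dimension count in the version you say you would present does not give the conclusion. Let $S=(W\times\{0\})\cap\mathbb{D}^\perp$. Your bound simplifies to $\dim(\mathbb{D}+S)\ge N-\dim(\mathbb{D}\cap(W\times\{0\}))$, and this need not exceed $\dim\mathbb{D}$ just because $\dim\mathbb{D}<N$. For an isotropic $\mathbb{D}\subset W\times\{0\}$ with $N/2\le\dim\mathbb{D}<N$, the right-hand side is $N-\dim\mathbb{D}\le\dim\mathbb{D}$, so maximality is not contradicted. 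The loss is in the estimate $\dim S\ge N-\dim\mathbb{D}$. The $\dim\mathbb{D}$ linear conditions cutting $S$ out of $W\times\{0\}$ are not independent: those coming from elements of $\mathbb{D}\cap(W\times\{0\})$ vanish identically on $W\times\{0\}$.

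The missing step is the exact count, which is what the paper's identity $\dim\mathbb{W}+\dim p(\mathbb{D}_{\mathbf{x}})=n$ records. The set $S=\{(X,\mathbf{0}) : \alpha(X)=0 \text{ for all } (Y,\alpha)\in\mathbb{D}\}$ has dimension $N-\dim p(\mathbb{D})=N-\dim\mathbb{D}+\dim(\mathbb{D}\cap(W\times\{0\}))$, because $\ker p=W\times\{0\}$. Combined with $\mathbb{D}\cap S=\mathbb{D}\cap(W\times\{0\})$, this gives $\dim(\mathbb{D}+S)=N$ exactly. With that correction your proof is right and coincides with the paper's.

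The exact count also lets you drop Step 1. The subspace $\mathbb{L}+\bigl((W\times\{0\})\cap\mathbb{L}^\perp\bigr)$ is already isotropic of dimension $N$, hence equal to its own orthogonal complement. It is therefore the required $\mathbb{D}_{\mathbf{x}}^{\text{ext}}$, with no appeal to maximality or Zorn's lemma.

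Your signature route does not work as written either, because the claim that $\mathbb{D}^\perp/\mathbb{D}$ has split signature is asserted rather than shown. Proving it needs, for example:
\begin{itemize}
\item an isotropic $\mathbb{D}'$ paired nondegenerately with $\mathbb{D}$;
\item the orthogonal splitting $\mathbb{V}_{\mathbf{x}}^{\text{ext}}=(\mathbb{D}\oplus\mathbb{D}')\oplus(\mathbb{D}\oplus\mathbb{D}')^\perp$ with $(\mathbb{D}\oplus\mathbb{D}')^\perp\cong\mathbb{D}^\perp/\mathbb{D}$;
\item Sylvester's law of inertia.
\end{itemize}
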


For the port-Hamiltonian system \eqref{PH} with \(R(\mathbf{x}) = 0\), the vectors
\[
\mathbf{v} = \left(-\dot{\mathbf{x}}^1, \nabla H(\mathbf{x}), \mathbf{u}^1, \mathbf{d}^1, C(\mathbf{x})^\top \nabla H(\mathbf{x}), B(\mathbf{x}) \nabla H(\mathbf{x})\right)
\]
and similarly \(\mathbf{w}\) satisfy \(\langle \mathbf{v}, \mathbf{w} \rangle = 0\) because
\[
\nabla H(\mathbf{x})^\top \dot{\mathbf{x}} = \nabla H(\mathbf{x})^\top C(\mathbf{x}) \mathbf{u} + \nabla H(\mathbf{x})^\top B(\mathbf{x}) \mathbf{d}
\]
for all \(\mathbf{u} \in \mathbb{R}^m, \mathbf{d} \in \mathbb{R}^k\). By Theorem \ref{inDext}, there exists a Dirac structure \(\mathbb{D}\) such that
\[
\mathbb{W} = \left\{ (-\dot{\mathbf{x}}, \nabla H, \mathbf{u}, \mathbf{d}, C^\top \nabla H, B \nabla H) \mid \mathbf{u} \in \mathbb{R}^m, \mathbf{d} \in \mathbb{R}^k \right\} \subset \mathbb{D}_{\mathbf{x}}
\]
for all \(\mathbf{x} \in \mathbb{R}^n\). This reveals the relationship between Dirac structures and power conservation. Therefore, in order to better simulate the energy variation of a port-Hamilton system, the Dirac structure can be discretized, and then an algorithm that satisfies the discrete Dirac structure can be designed.

\subsection{An Algorithm Preserving Dirac Structure}

We now focus on the Dirac structure of a port-Hamiltonian system without damping (i.e., \(R(\mathbf{x}) = 0\)). The condition that \(\mathbb{W}\) is contained in a Dirac structure is equivalent to
\[
\nabla H(\mathbf{x})^\top \dot{\mathbf{x}} = \nabla H(\mathbf{x})^\top C(\mathbf{x}) \mathbf{u} + \nabla H(\mathbf{x})^\top B(\mathbf{x}) \mathbf{d}
\]
for all \(\mathbf{u}, \mathbf{d}\).

Let \(\mathbf{x}_k = \mathbf{x}(kh)\), \(\mathbf{x}_{k+1} = \mathbf{x}((k+1)h)\) with step size \(h > 0\). Integrating the above equality gives \begin{align}\label{int}
	H(\mathbf{x}_{k+1})-H(\mathbf{x}_k)=\int_{kh}^{(k+1)h}\nabla H^\top C(\mathbf{x}(t))\mathbf{u}(t)+\nabla H^\top B(\mathbf{x}(t))\mathbf{d}(t))dt.
\end{align}

On the interval \((kh, (k+1)h]\), \(\mathbf{u}(t)\) is constant, and \(\mathbf{d}(t)\) is unknown in practice. Hence we approximate them by \(\mathbf{u}_k\) and \(\mathbf{d}_k\).

Now discretize the Dirac structure of a port-Hamilton system based on the characteristics of power variation~\cite{hairer2006geometric,reich1996enhancing}.

If there exists a function \(\overline{\nabla H}: \mathbb{R}^n \times \mathbb{R}^n \to \mathbb{R}^n\) such that
\[
H(\mathbf{x}_{k+1}) - H(\mathbf{x}_k) = \overline{\nabla H}(\mathbf{x}_k, \mathbf{x}_{k+1})^\top (\mathbf{x}_{k+1} - \mathbf{x}_k),
\]
then equation \eqref{int} can be approximated by
\[
\overline{\nabla H}(\mathbf{x}_k, \mathbf{x}_{k+1})^\top (\mathbf{x}_{k+1} - \mathbf{x}_k) = h \overline{\nabla H}(\mathbf{x}_k, \mathbf{x}_{k+1})^\top \bigl( C_{k,k+1} \mathbf{u}_k + B_{k,k+1} \mathbf{d}_k \bigr),
\]
where \(C_{k,k+1}\) and \(B_{k,k+1}\) are matrices such that \(h(C_{k,k+1} \mathbf{u}_k + B_{k,k+1} \mathbf{d}_k)\) approximates the integral on the right-hand side of \eqref{int}.

\begin{remark}
	If \(C(\mathbf{x})^\top \nabla H(\mathbf{x}) = \mathbf{0}\) or \(B(\mathbf{x})^\top \nabla H(\mathbf{x}) = \mathbf{0}\), then in order to preserve the property that control inputs or disturbance inputs have no effect on energy, \(C_{k,k+1}\) or \(B_{k,k+1}\) has to satisfy \(C_{k,k+1}^\top \overline{\nabla H}(\mathbf{x}_k,\mathbf{x}_{k+1}) = \mathbf{0}\) or \(B_{k,k+1}^\top \overline{\nabla H}(\mathbf{x}_k,\mathbf{x}_{k+1}) = \mathbf{0}\), respectively.
\end{remark}

\begin{definition}
	Given a port-Hamiltonian system \(\dot{\mathbf{x}} = J(\mathbf{x}) \nabla H(\mathbf{x}) + B(\mathbf{x}) \mathbf{d} + C(\mathbf{x}) \mathbf{u}\), suppose there exists a function \(\overline{\nabla H}\) satisfying the discrete energy difference. Then an algorithm \(\mathbf{x}_{k+1} = \phi(\mathbf{x}_k)\) is called \textbf{Dirac-structure-preserving} if
	\[
	\overline{\nabla H}(\mathbf{x}_k, \mathbf{x}_{k+1})^\top (\mathbf{x}_{k+1} - \mathbf{x}_k) = h \overline{\nabla H}(\mathbf{x}_k, \mathbf{x}_{k+1})^\top \bigl( C_{k,k+1} \mathbf{u}_k + B_{k,k+1} \mathbf{d}_k \bigr)
	\]
	holds for all \(\mathbf{u}_k \in \mathbb{R}^m\) and \(\mathbf{d}_k \in \mathbb{R}^k\).
\end{definition}

For the grid-forming SVG system \eqref{PHMod}, we have
\[
H(\mathbf{x}_{k+1}) - H(\mathbf{x}_k) =  \nabla H\left(\frac{\mathbf{x}_{k+1} + \mathbf{x}_k}{2}\right)^\top (\mathbf{x}_{k+1} - \mathbf{x}_k).
\]
Since \(C(\mathbf{x})^\top \nabla H(\mathbf{x}) = 0\) and \(B\) is constant, we set \(B_{k,k+1} = B\). Hence a Dirac-structure-preserving algorithm must satisfy
\[
\nabla H\left(\frac{\mathbf{x}_{k+1} + \mathbf{x}_k}{2}\right)^\top (\mathbf{x}_{k+1} - \mathbf{x}_k) = h \nabla H\left(\frac{\mathbf{x}_{k+1} + \mathbf{x}_k}{2}\right)^\top B \mathbf{d}_k.
\]
If \(\mathbf{d}_k = 0\), then \(H(\mathbf{x}_{k+1}) = H(\mathbf{x}_k)\), which is consistent with \(\frac{dH}{dt} = 0\) in system \eqref{PHMod}.

A simple implementation is the midpoint rule~\cite{hairer2006geometric,reich1996enhancing,hairer2011geometric}:
\begin{equation}
	\frac{\mathbf{x}_{k+1} - \mathbf{x}_k}{h} = J \nabla H\!\left(\frac{\mathbf{x}_{k+1} + \mathbf{x}_k}{2}\right) + B \mathbf{d}_k + C\!\left(\frac{\mathbf{x}_{k+1} + \mathbf{x}_k}{2}\right) \mathbf{u}_k, \label{eq:midpoint}
\end{equation}
which has truncation error \(O(h^3)\) when \(\mathbf{d}_k = \mathbf{0}\).

Solving \eqref{eq:midpoint} yields
\[
\mathbf{x}_{k+1} = \left( I - \frac{h}{2} J_0 + \frac{h}{2} K_0 \right)^{-1} \left( \left( I + \frac{h}{2} J_0 - \frac{h}{2} K_0 \right) \mathbf{x}_k + h B \mathbf{d}_k + h C_0 \mathbf{u}_k \right),
\]
where
\[
J_0 = \begin{bmatrix}
	0 & \omega & -\frac{1}{L} & 0 & 0 \\
	-\omega & 0 & 0 & -\frac{1}{L} & 0 \\
	\frac{1}{C} & 0 & 0 & \omega & 0 \\
	0 & \frac{1}{C} & -\omega & 0 & 0 \\
	0 & 0 & 0 & 0 & 0
\end{bmatrix}, 
K_0 = \begin{bmatrix}
	0 & 0 & 0 & 0 & 0 \\
	0 & 0 & 0 & 0 & 0 \\
	0 & 0 & 0 & 0 & 0 \\
	0 & 0 & 0 & 0 & 0 \\
	u_1 & u_2 & 0 & 0 & 0
\end{bmatrix}, 
C_0 = \begin{bmatrix}
	\frac{1}{L} & 0 \\
	0 & \frac{1}{L} \\
	0 & 0 \\
	0 & 0 \\
	0 & 0
\end{bmatrix}.
\]

The above scheme preserves the Dirac structure and exactly conserves the Hamiltonian energy when external disturbances are absent.

\section{Numerical Experiments}\label{numexp}
The previous section introduced a Dirac‑structure‑preserving numerical algorithm that exactly conserves energy in the absence of disturbances. To validate its practical advantages, we now compare it against a conventional Runge‑Kutta method. Additionally, the ISS‑based controller designed in Section \ref{con} is tested against a standard PI controller. For SVG system \eqref{SVG}, set \(L = C = \omega = 1\). This section examines the performance of the proposed control method and the structure-preserving algorithm.

\subsection{Comparison of Different Control Methods}\label{comparison}
The controlled subsystem is
\begin{equation}
	\frac{d}{dt} \begin{bmatrix} x_1 \\ x_2 \\ x_3 \\ x_4 \end{bmatrix} = \begin{bmatrix}
		0 & 1 & -1 & 0 \\
		-1 & 0 & 0 & -1 \\
		1 & 0 & 0 & 1 \\
		0 & 1 & -1 & 0
	\end{bmatrix} \begin{bmatrix} x_1 \\ x_2 \\ x_3 \\ x_4 \end{bmatrix} - \begin{bmatrix} 0 \\ 0 \\ i_g^d \\ i_g^q \end{bmatrix} + \begin{bmatrix} 1 & 0 \\ 0 & 1 \\ 0 & 0 \\ 0 & 0 \end{bmatrix} \begin{bmatrix} u_1 \\ u_2 \end{bmatrix}. \label{ex}
\end{equation}

Assuming \(u_1, u_2\) are constant over each time step, the exact solution can be obtained via matrix exponentials. For \(i_g^d = i_g^q = 0\),
\[
\mathbf{x}(t) = \Phi(t) \mathbf{x}(0) + \Psi(t) \begin{bmatrix} u_1 \\ u_2 \end{bmatrix},
\]
with explicit expressions given in the original paper. For \(i_g^d(t) = \cos 2t,\; i_g^q(t) = \sin 2t\), similar explicit formulas exist.

First, a PI controller is applied:
\[
\mathbf{u} = -K_p \begin{bmatrix} x_1 \\ x_2 \end{bmatrix} - K_i \begin{bmatrix} \int_0^t x_1(\tau) d\tau \\ \int_0^t x_2(\tau) d\tau \end{bmatrix},
\]
where \(K_p, K_i\) are determined via LQR with \(Q = \operatorname{diag}(0,0,10,10,1,1)\), \(R = I_2\). This yields approximately
\[
K_p \approx \begin{bmatrix} 2.1956 & -0.8878 \\ 0.8878 & 2.1956 \end{bmatrix}, \quad
K_i \approx \begin{bmatrix} 0.8364 & 0.5481 \\ -0.5481 & 0.8364 \end{bmatrix}.
\]
The integral is approximated by the trapezoidal rule.

Second, our ISS-based controller from Section~\ref{con} is used. With \(\alpha = 2\), \(\epsilon = 0.125\),
\[
\begin{cases}
	u_1 = -x_1 \left( 2\dfrac{x_3^2 + x_4^2}{x_1^2 + x_2^2} + 1 \right), \\[1em]
	u_2 = -x_2 \left( 2\dfrac{x_3^2 + x_4^2}{x_1^2 + x_2^2} + 1 \right),
\end{cases}
\]
where the ratio is capped at 5 when large.

Figures \ref{fig:control1} and \ref{fig:control2} compare the PI controller and the proposed ISS-based controller under no disturbance (Fig. \ref{fig:control1}) and under a periodic disturbance $i_g^d=\cos 2t,i_g^q=\sin 2t$ (Fig. \ref{fig:control2}), respectively. In both cases, the ISS controller achieves superior performance. Regarding steady‑state offset: under no disturbance, both controllers drive all states exactly to zero; under disturbance, the PI controller produces an oscillatory offset with amplitude $\approx 0.5$ around a non‑zero mean, whereas the ISS controller reduces the offset to about 0.6. In terms of settling time, the PI controller takes approximately 12s (Fig. \ref{fig:control1}) and 17s (Fig. \ref{fig:control2}), while the ISS controller settles significantly faster, demonstrating its advantage in both transient and steady‑state behavior. Moreover, the magnitude of the control input required by the ISS controller is consistently lower than that of the PI controller under the same conditions, indicating better energy efficiency. Beyond these quantitative advantages, the ISS controller is not a black‑box design: it has only three tunable parameters ($\alpha,\epsilon$ and a saturation bound), each with a clear physical interpretation. $\alpha$ governs the convergence speed, $\epsilon=\frac{\beta}{\alpha}$ determines the ultimate bound of $H_0$, and the saturation bound directly limits the control effort. This transparency makes parameter tuning straightforward and much less demanding than the trial‑and‑error or LQR‑based tuning required for the PI controller. Hence, the ISS controller is both more effective and easier to implement in practice.

\begin{figure}[H]
	\centering
	\includegraphics[width=0.49\textwidth]{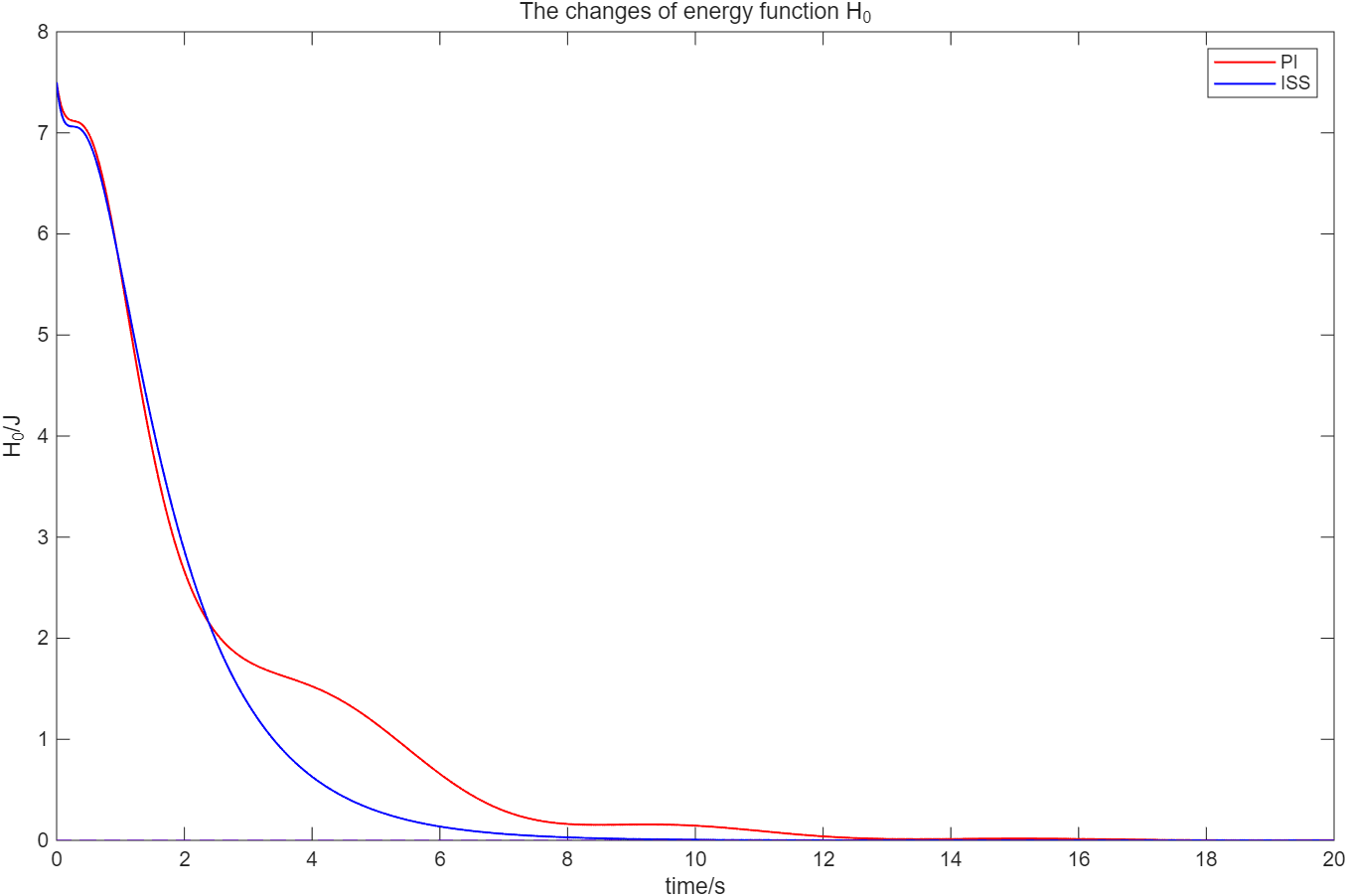}
	\includegraphics[width=0.49\textwidth]{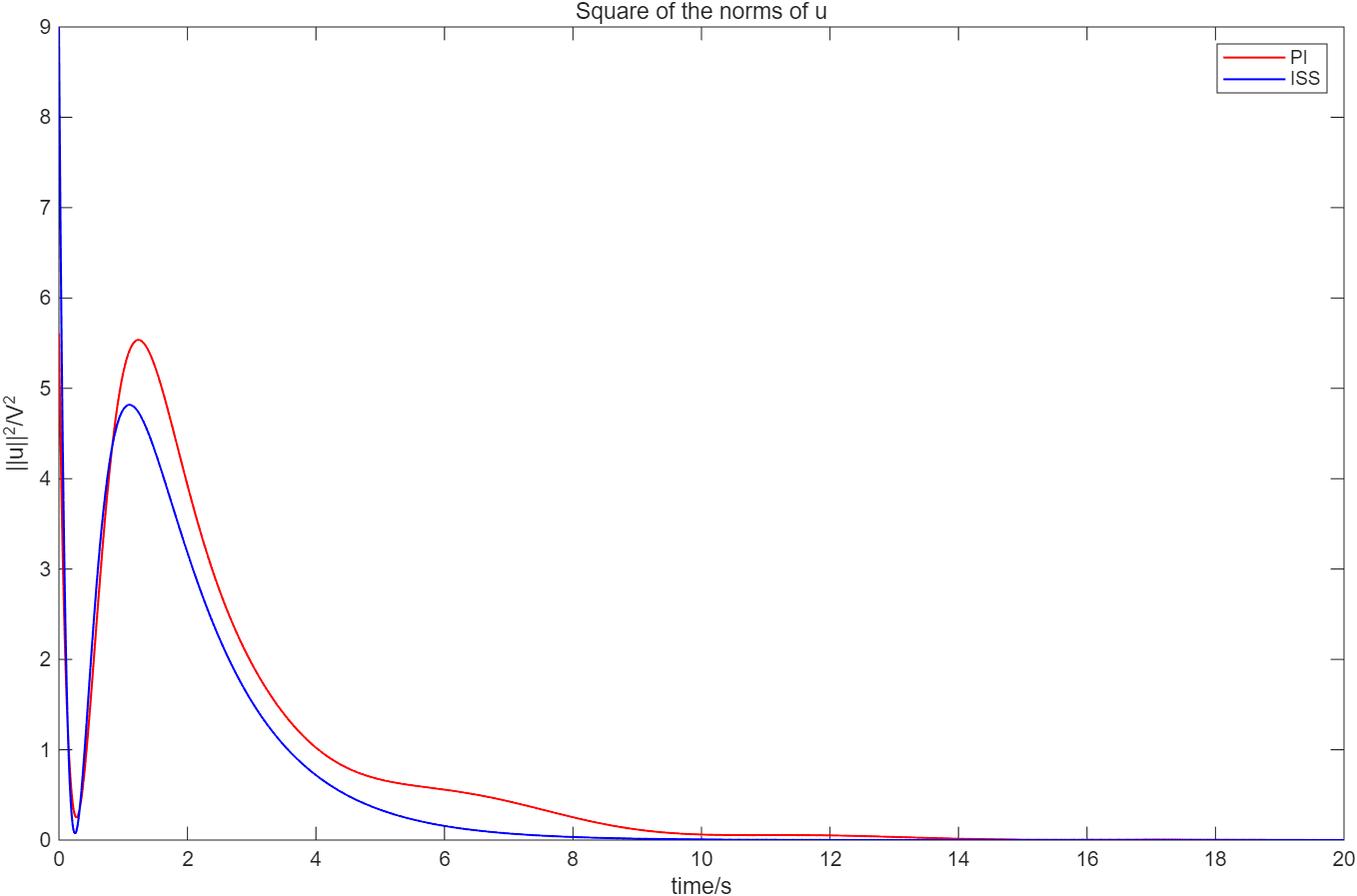}
	\includegraphics[width=0.49\textwidth]{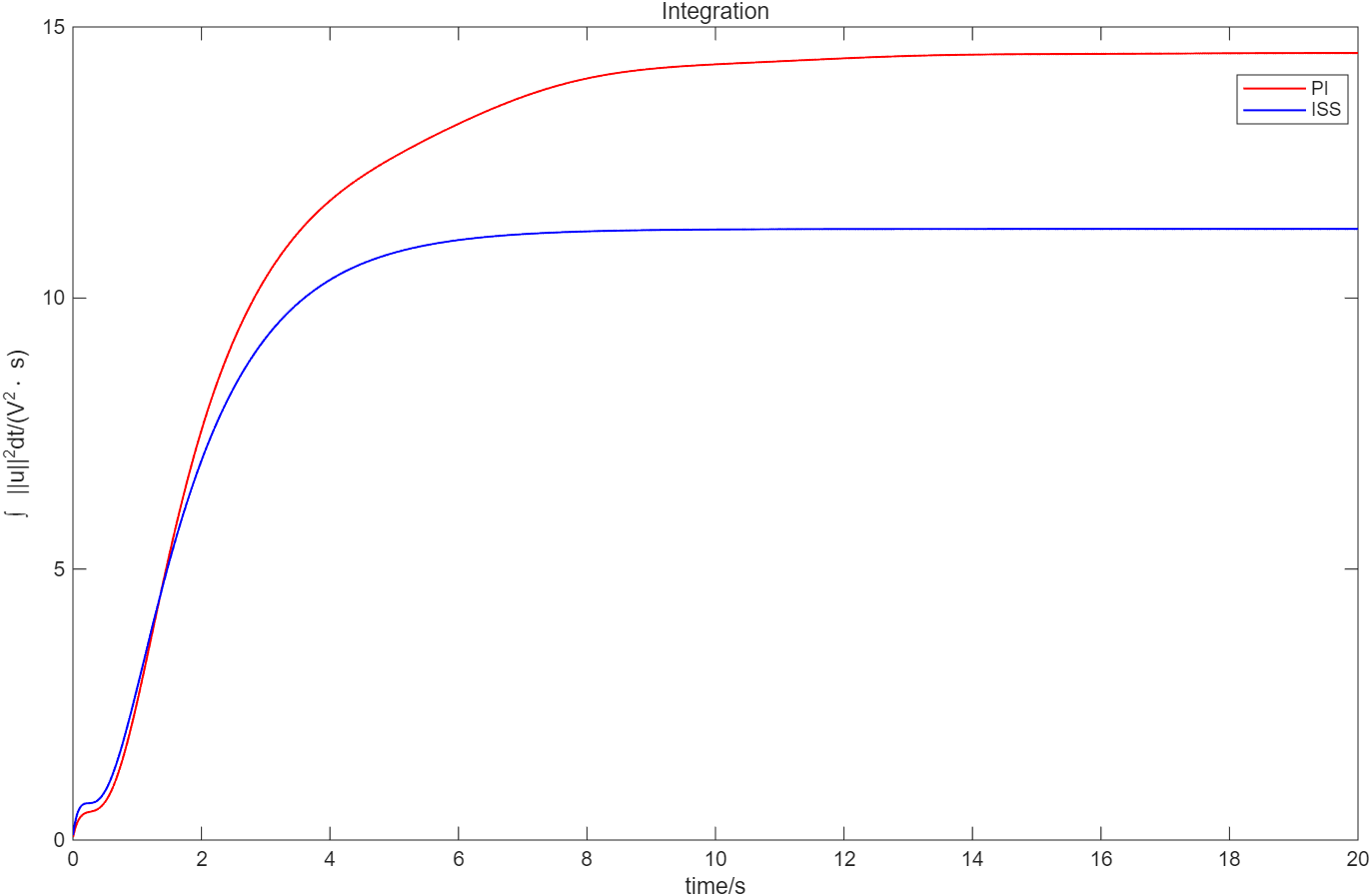}
	\caption{The comparison of two controllers as $\mathbf{d}=\mathbf{0}$.}
	\label{fig:control1}
\end{figure}

\begin{figure}[H]
	\centering
	\includegraphics[width=0.49\textwidth]{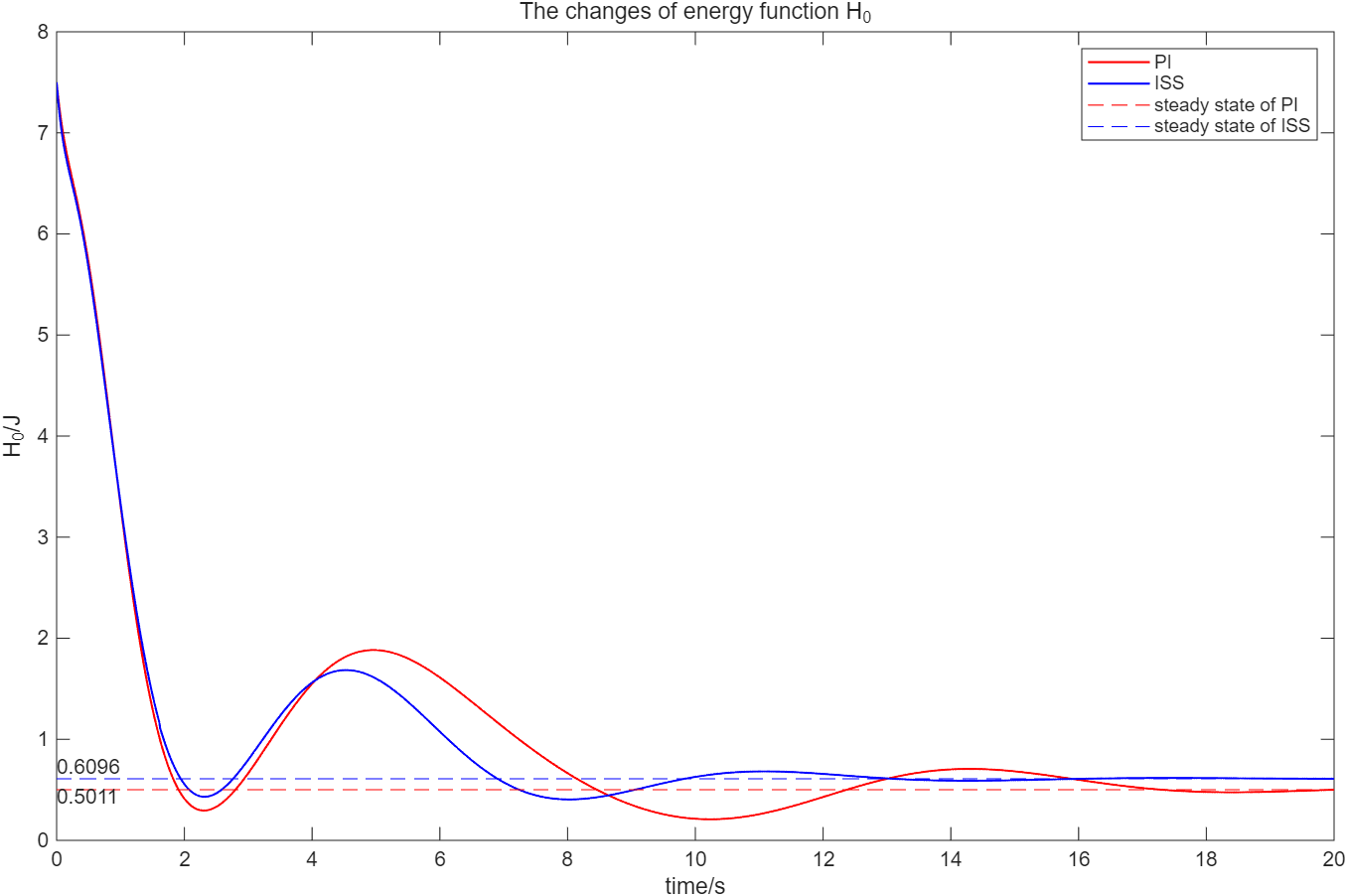}
	\includegraphics[width=0.49\textwidth]{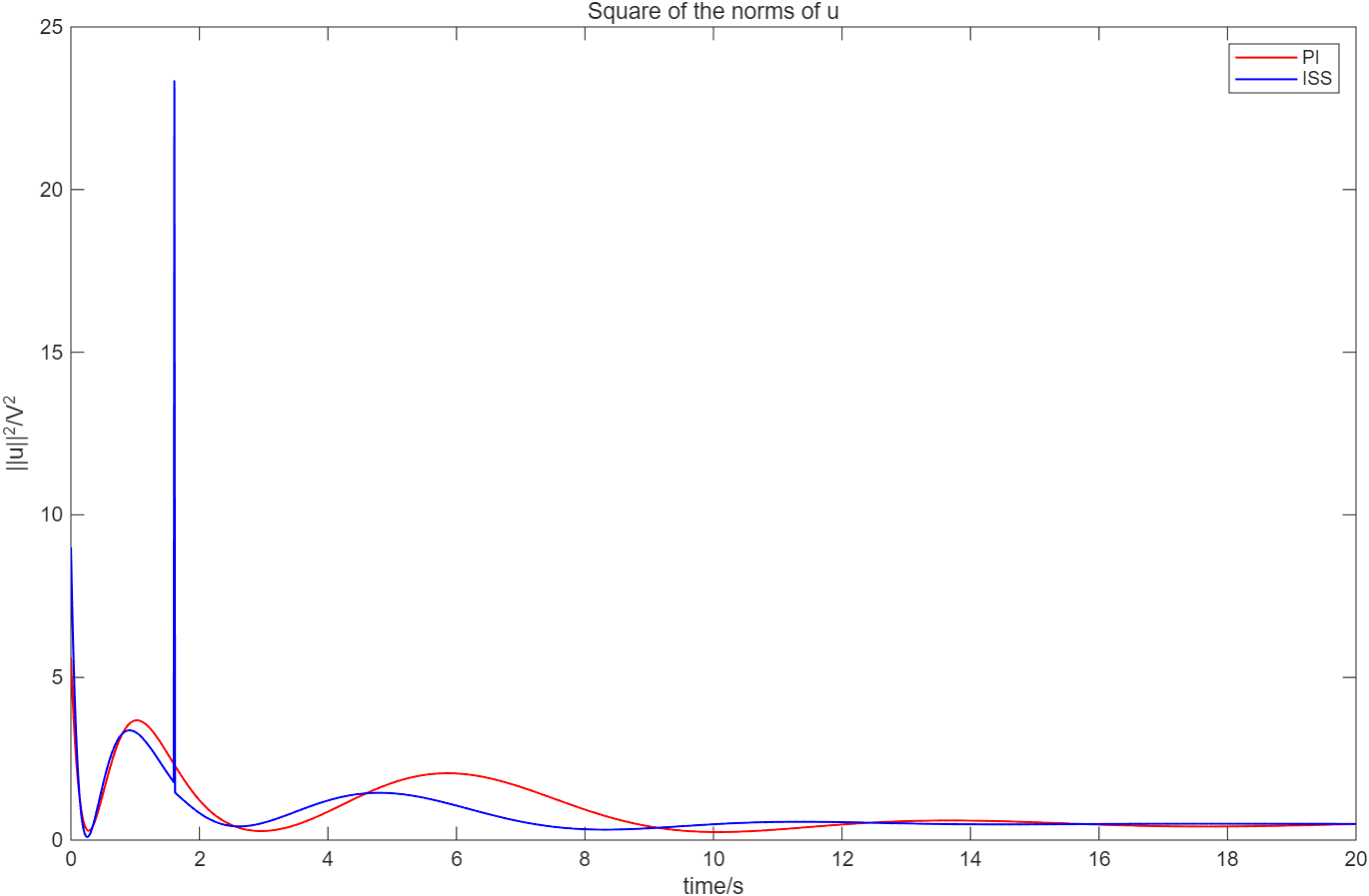}
	\includegraphics[width=0.49\textwidth]{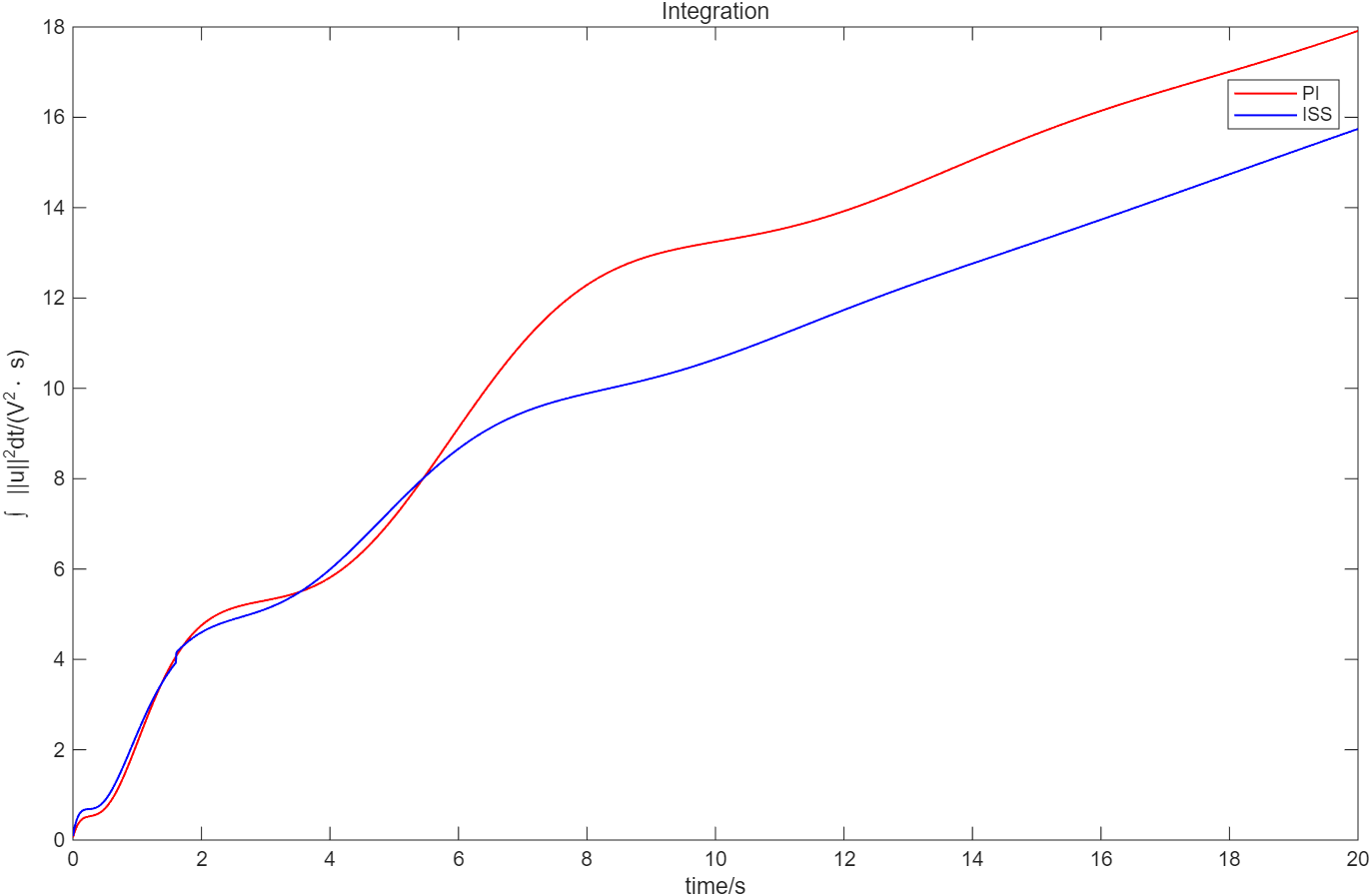}
	\caption{The comparison of two controllers as $\mathbf{d}=\begin{bmatrix}
			\cos 2t\\\sin 2t
		\end{bmatrix}$.}
	\label{fig:control2}
\end{figure}

\subsection{Comparison of Different Algorithms}
We now compare two algorithms with the same truncation error $O(h^3)$ on the system
\begin{equation}
	\frac{d}{dt} \begin{bmatrix} x_1 \\ x_2 \\ x_3 \\ x_4\\x_5 \end{bmatrix} = \begin{bmatrix}
		0 & 1 & -1 & 0 &0\\
		-1 & 0 & 0 & -1 &0\\
		1 & 0 & 0 & 1&0 \\
		0 & 1 & -1 & 0&0\\
		0&0&0&0&0
	\end{bmatrix} \begin{bmatrix} x_1 \\ x_2 \\ x_3 \\ x_4\\x_5 \end{bmatrix} + \begin{bmatrix} 1 & 0 \\ 0 & 1 \\ 0 & 0 \\ 0 & 0\\-x_1&-x_2 \end{bmatrix} \begin{bmatrix} u_1 \\ u_2 \end{bmatrix}. \label{eq:exp_sys2}
\end{equation}
Set \(\mathbf{d} = \mathbf{0}\) and use the ISS controller from Section~\ref{con}. The exact solution is given by a known matrix exponential.

\begin{itemize}
	\item \textbf{Algorithm A (non-preserving):} Two-stage second-order method:
	\begin{align*}
	\mathbf{x}_{k+1}&=\mathbf{x}_k+\frac{h}{2}(\mathbf{k}_1+\mathbf{k}_2),\\
	\mathbf{k}_1&=\mathbf{f}\left(\mathbf{x}_k+\frac{h}{4}\mathbf{k}_1\right),\\    \mathbf{k}_2&=\mathbf{f}\left(\mathbf{x}_k+h\left(\mathbf{k}_2-\frac{1}{4}\mathbf{k}_1\right)\right).
	\end{align*}
	\item \textbf{Algorithm B (preserving):} The midpoint method \begin{equation*}
	\frac{\mathbf{x}_{k+1} - \mathbf{x}_k}{h} = J \nabla H\!\left(\frac{\mathbf{x}_{k+1} + \mathbf{x}_k}{2}\right) + B \mathbf{d}_k + C\!\left(\frac{\mathbf{x}_{k+1} + \mathbf{x}_k}{2}\right) \mathbf{u}_k
\end{equation*} that preserves the Dirac structure.
\end{itemize}

Figure~\ref{fig:algorithm} shows the comparison. The Dirac‑structure‑preserving midpoint method exactly conserves the Hamiltonian energy $H$ when $\mathbf{d}=\mathbf{0}$, i.e., $H(\mathbf{x}_{k+1})=H(\mathbf{x}_k)$ for all steps, whereas the Runge‑Kutta method exhibits a small but steadily growing energy drift. Furthermore, the error of the preserving algorithm is insensitive to the magnitude of the control input $\mathbf{u}$: even when $||\mathbf{u}||$ is large, the energy error remains exactly zero and the trajectory error stays bounded. In contrast, the Runge‑Kutta method shows error amplification proportional to $||\mathbf{u}||$. This insensitivity makes the Dirac‑preserving algorithm particularly suitable for long‑term simulations where strong control actions are applied.

\begin{figure}[H]
	\centering
	\includegraphics[width=0.49\textwidth]{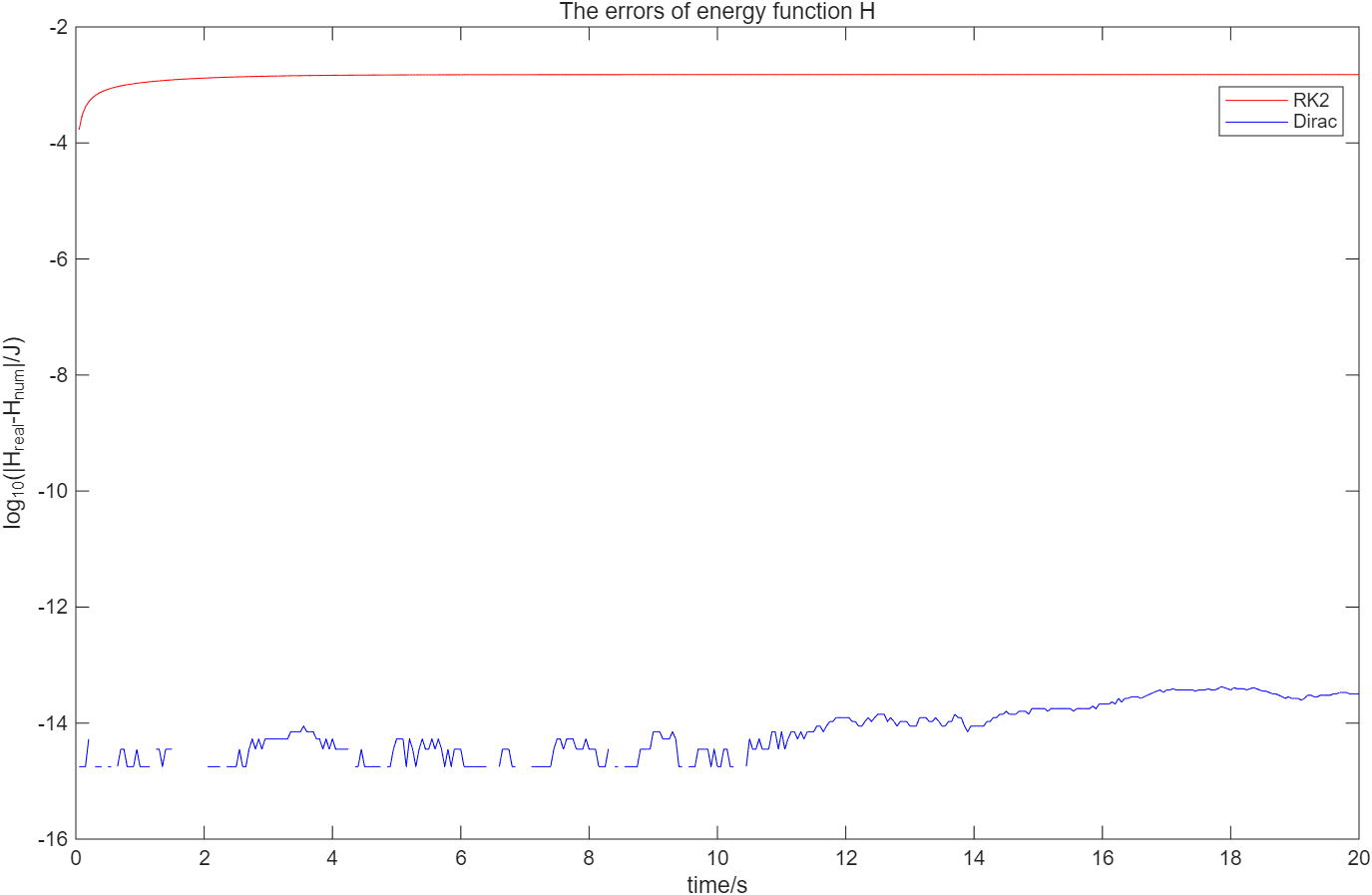}
	\includegraphics[width=0.49\textwidth]{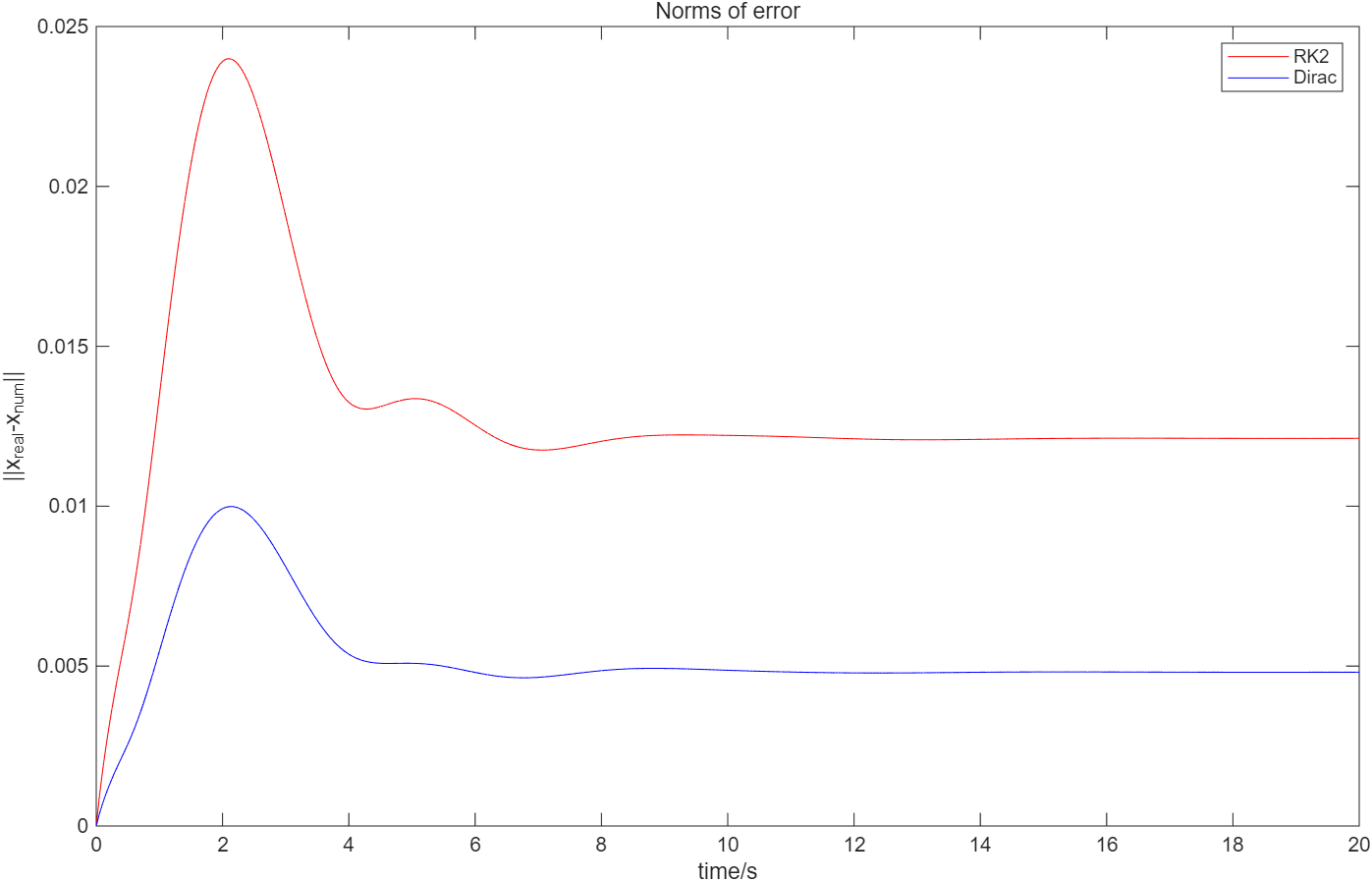}
	\caption{The comparison of two algorithms as $\mathbf{d}=\mathbf{0}$.}
	\label{fig:algorithm}
\end{figure}

\section{Conclusion}\label{conclusion}
Based on the analysis in sections \ref{con} and \ref{numexp}, the proposed controller, grounded in input‑to‑state stability (ISS), is feasible both theoretically and practically. On one hand, it achieves better regulation of $x_1$ through $x_4$ toward zero while using significantly lower control input magnitudes compared to the PI controller. On the other hand, it is not a black‑box controller: the three parameters have clear mathematical interpretations – $\alpha$ governs the convergence rate, $\beta/\alpha$ determines the size of the region to which $H_0$ eventually shrinks, and the third parameter represents the upper bound of the control input.

From sections \ref{algorithm} and \ref{numexp}, the Dirac‑structure‑preserving numerical algorithm exactly conserves the energy $H$ when $\mathbf{d}=\mathbf{0}$. Consequently, the numerical results do not drift significantly, even under strong control inputs. Moreover, by adhering to the energy conservation constraint, the algorithm maintains better control over errors in long‑term simulations.

The most difficult mathematical difficulties overcome in this work are the following. (i) The impossibility of exact disturbance cancellation forced us to adopt an ISS framework rather than standard matching-based energy shaping. This required a careful derivation of the inequality $dH/dt\le -\alpha H+\beta ||\mathbf{d}||^2$ and the construction of a feedback controller that achieves this bound without requiring $B(\mathbf{x})=C(\mathbf{x})K(\mathbf{x})$. The resulting controller involves rational terms $x_i/(x_1^2+x_2^2)$, which are well-defined only away from the origin; we provided a saturation mechanism to handle near-zero denominators. (ii) Preserving the Dirac structure under discretization required an energy-conjugate approximation $\overline{\nabla H}$ such that $H(\mathbf{x}_{k+1})-H(\mathbf{x}_k)=\overline{\nabla H}^\top(\mathbf{x}_{k+1}-\mathbf{x}_k)$. The midpoint rule achieves this with second-order accuracy because $C(\mathbf{x})^\top \nabla H(\mathbf{x})=0$ in our system. For more general port-Hamiltonian systems with nonzero $C^\top \nabla H$, a more complex discrete gradient would be needed. Thus, the specific algebraic structure of the SVG model was exploited to design a simple yet exactly energy-conserving scheme.

Still, two problems remain to be addressed in our future research. First, the parameter representing the upper bound of the control input $\mathbf{u}$ still relies on empirical tuning. Second, in the presence of an external disturbance $\mathbf{d}$ that affects the energy $H$, the algorithm is likely unable to preserve $H$ because $\mathbf{d}$ is typically unknown in practice. 
\section{Acknowledgment}
This study is supported by State Grid Corporation of China Headquarters Science and Technology Project ``Research on Nonlinear Control and Stability of All-Power-Electronics Power Systems'' (Grant No. 52170025002L-464-FGS).
\bibliographystyle{plain}
\bibliography{refs}

@article{qaisar2025grid,
	title={Grid-forming converters for renewable generation: A comprehensive review},
	author={Qaisar, Muhammad Waqas and Fang, Jingyang},
	journal={Energies},
	volume={18},
	number={17},
	pages={4565},
	year={2025},
	publisher={MDPI},
    note = {\href{https://doi.org/10.3390/en18174565}{doi:10.3390/en18174565}}
}

@article{blaabjerg2017distributed,
	title={Distributed power-generation systems and protection},
	author={Blaabjerg, Frede and Yang, Yongheng and Yang, Dongsheng and Wang, Xiongfei},
	journal={Proceedings of the IEEE},
	volume={105},
	number={7},
	pages={1311--1331},
	year={2017},
	publisher={IEEE},
    note = {\href{https://doi.org/10.1109/JPROC.2017.2696878}{doi:10.1109/JPROC.2017.2696878}}
}

@inproceedings{teng2024adaptive,
	title={Adaptive control method of grid-forming svg under non-ideal power grid condition},
	author={Teng, Yufei and Wang, Xi and Shi, Peng and Chang, Zhengwei and Wang, Biao and Ye, Xi and Bai, Jiayu and Han, Lianshan and Dai, Tong and Wang, Yuhong and others},
	booktitle={2024 IEEE 8th Conference on Energy Internet and Energy System Integration (EI2)},
	pages={3237--3241},
	year={2024},
	organization={IEEE},
    note = {\href{https://doi.org/10.1109/EI264398.2024.10991376}{doi:10.1109/EI264398.2024.10991376}}
}

@article{xuadaptive,
	title={Adaptive PI Control of STATCOM for Voltage Regulation},
	author={Xu, Yao and Li, Fangxing},
    journal={IEEE Transactions on Power Delivery},
    year={2014},
    volume={29},
    number={3},
    pages={1002-1011},
    note = {\href{https://doi.org/10.1109/TPWRD.2013.2291576}{doi:10.1109/TPWRD.2013.2291576}}
}

@article{ortega2002interconnection,
	title={Interconnection and damping assignment passivity-based control of port-controlled Hamiltonian systems},
	author={Ortega, Romeo and Van Der Schaft, Arjan and Maschke, Bernhard and Escobar, Gerardo},
	journal={Automatica},
	volume={38},
	number={4},
	pages={585--596},
	year={2002},
	publisher={Elsevier},
    note = {\href{https://doi.org/10.1016/S0005-1098(01)00278-3}{doi:10.1016/S0005-1098(01)00278-3}}
}

@article{van2020port,
	title={Port-Hamiltonian modeling for control},
	author={Van Der Schaft, Arjan},
	journal={Annual Review of Control, Robotics, and Autonomous Systems},
	volume={3},
	number={1},
	pages={393--416},
	year={2020},
	publisher={Annual Reviews},
    note = {\href{https://doi.org/10.1146/annurev-control-081219-092250}{doi:10.1146/annurev-control-081219-092250}}
}

@incollection{maschke1993port,
	title={Port-controlled Hamiltonian systems: modelling origins and systemtheoretic properties},
	author={Maschke, Bernhard M and van der Schaft, Arjan J},
	booktitle={Nonlinear control systems design 1992},
	pages={359--365},
	year={1993},
	publisher={Elsevier},
    note = {\href{https://doi.org/10.1016/B978-0-08-041901-5.50064-6}{doi:10.1016/B978-0-08-041901-5.50064-6}}
}

@article{kumar2025port,
	title={Port-Hamiltonian discontinuous Galerkin finite element methods},
	author={Kumar, Nishant and van der Vegt, Jaap JW and Zwart, Hans J},
	journal={IMA Journal of Numerical Analysis},
	volume={45},
	number={1},
	pages={354--403},
	year={2025},
	publisher={Oxford University Press},
    note = {\href{https://doi.org/10.1093/imanum/drae008}{doi:10.1093/imanum/drae008}}
}

@Book{hairer2006geometric,
  author    = {Hairer, Ernst and Lubich, Christian and Wanner, Gerhard},
  publisher = {Springer},
  title     = {Geometric Numerical Integration: Structure-Preserving Algorithms for Ordinary Differential Equations},
  year      = {2006},
  address   = {Berlin},
  edition   = {second},
  isbn      = {978-3-540-30663-4},
  series    = {Springer Series in Computational Mathematics},
  volume    = {31},
}

@article{kong2023control,
	title={Control design of passive grid-forming inverters in port-Hamiltonian framework},
	author={Kong, Le and Xue, Yaosuo and Qiao, Liang and Wang, Fei},
	journal={IEEE Transactions on Power Electronics},
	volume={39},
	number={1},
	pages={332--345},
	year={2023},
	publisher={IEEE},
    note = {\href{https://doi.org/10.1109/TPEL.2023.3319966}{doi:10.1109/TPEL.2023.3319966}}
}

@article{reich1996enhancing,
	title={Enhancing energy conserving methods},
	author={Reich, Sebastian},
	journal={BIT Numerical Mathematics},
	volume={36},
	number={1},
	pages={122--134},
	year={1996},
	publisher={Springer},
    note = {\href{https://doi.org/10.1007/BF01740549}{doi:10.1007/BF01740549}}
}

@article{sontag1989smooth,
	title={Smooth stabilization implies coprime factorization},
	author={Sontag, Eduardo D},
	journal={IEEE transactions on automatic control},
	volume={34},
	number={4},
	pages={435--443},
	year={1989},
    note = {\href{https://doi.org/10.1109/9.28018}{doi:10.1109/9.28018}}
}

@incollection{sontag1995characterizations,
	title={On characterizations of input-to-state stability with respect to compact sets},
	author={Sontag, Eduardo D and Wang, Yuan},
	booktitle={Nonlinear control systems design 1995},
	pages={203--208},
	year={1995},
	publisher={Elsevier},
    note = {\href{https://doi.org/10.1016/B978-0-08-042371-5.50039-3}{doi:10.1016/B978-0-08-042371-5.50039-3}}
}

@book{van2000l2,
	title={L2-gain and passivity techniques in nonlinear control},
	author={Van der Schaft, Arjan},
	year={2000},
	publisher={Springer}
}

@article{hairer2011geometric,
	title={Geometric numerical integration},
	author={Hairer, Ernst and Hochbruck, Marlis and Iserles, Arieh and Lubich, Christian},
	journal={Oberwolfach Reports},
	volume={8},
	number={1},
	pages={825--900},
	year={2011},
    note = {\href{https://doi.org/10.4171/OWR/2006/14}{doi:10.4171/OWR/2006/14}}
}

@book{arnold1992ordinary,
  title={Ordinary differential equations},
  author={Arnold, Vladimir I},
  year={1992},
  publisher={Springer Science \& Business Media}
}

@inproceedings{feng1984difference,
  title={On difference schemes and symplectic geometry},
  author={Feng, Kang},
  booktitle={Proceedings of the 5th international symposium on differential geometry and differential equations},
  year={1984}
}

@inproceedings{mehrmann2019structure,
  title={Structure-preserving discretization for port-Hamiltonian descriptor systems},
  author={Mehrmann, Volker and Morandin, Riccardo},
  booktitle={2019 IEEE 58th Conference on Decision and Control (CDC)},
  pages={6863--6868},
  year={2019},
  organization={IEEE},
  note = {\href{https://doi.org/10.1109/cdc40024.2019.9030180}{doi:10.1109/cdc40024.2019.9030180}}
}

@Book{tang2026geometric,
  author    = {Yifa Tang and Ruili Zhang and Beibei Zhu and Aiqing Zhu and Sixu Wu},
  publisher = {Science Press},
  title     = {Geometric Algorithms for Dynamical Systems with Applications (in Chinese)},
  year      = {2026},
  address   = {Beijing},
  isbn      = {9787030846334},
  month     = mar,
  series    = {Modern Mathematical Basic Series},
  volume    = {215},
  language  = {Chinese},
  pages     = {629},
}

@Book{Feng2020,
  author    = {Kang Feng},
  publisher = {Science Press},
  title     = {Collected Works of Feng Kang},
  year      = {2020},
  address   = {Beijing},
  edition   = {second},
  isbn      = {9787030656209},
  month     = jul,
}
\end{document}